\documentclass[reqno,english]{amsart}
\usepackage{amsfonts,amsmath,latexsym,verbatim,amscd,mathrsfs,color,array}
\usepackage[colorlinks=true]{hyperref}
\usepackage{amsthm,amsfonts}
\usepackage{amssymb}
\usepackage{pdfsync}
\usepackage{epstopdf}
\usepackage{cite}
\usepackage{graphicx}
\usepackage{datetime}

\usepackage{booktabs}

\usepackage{caption2}
\usepackage{subfigure}
\usepackage{float}
\usepackage{wrapfig}

\newtheorem{theorem}{Theorem}[section]

\newtheorem{cor}{Corollary}[section]
\newtheorem{prop}{Proposition}[section]
\newtheorem{remark}{Remark}[section]

\numberwithin{equation}{section}

\begin{document}
\title[Hamilton-Jacobi-Bellman equations]{Deep neural network approximations for the stable manifolds of the Hamilton-Jacobi-Bellman equations}
\author[G. Chen]{Guoyuan Chen}
\address{\noindent
School of Data Sciences, Zhejiang University of Finance \& Economics, Hangzhou 310018, Zhejiang, P. R. China}
\email{gychen@zufe.edu.cn}

\begin{abstract}
For an infinite-horizon control problem, the optimal control can be represented by the stable manifold of the characteristic Hamiltonian system of Hamilton-Jacobi-Bellman (HJB) equation in a semiglobal domain. In this paper, we first theoretically prove that if an approximation is sufficiently close to the exact stable manifold of the HJB equation in a certain sense, then the control derived from this approximation stabilizes the system and is nearly optimal. Then, based on the theoretical result, we propose a deep learning algorithm to approximate the stable manifold and compute optimal feedback control numerically. The algorithm relies on adaptive data generation through finding trajectories randomly within the stable manifold.
Such kind of algorithm is grid-free basically, making it potentially applicable to a wide range of high-dimensional nonlinear systems. We demonstrate the effectiveness of our method through two examples: stabilizing the Reaction Wheel Pendulums and controlling the parabolic Allen-Cahn equation.
\end{abstract}
\maketitle


\section{Introduction}

It is well known that solving Hamilton-Jacobi-Bellman (HJB) equation is essentially complicated in general and requires advanced computation tools and techniques.
In this paper, we focus on the stationary HJB equations which are related to infinite-horizon optimal control and $H^\infty$ control problems. The stabilizing solutions of the stationary HJB equations correspond to the stable manifolds of the characteristic Hamiltonian systems at equilibriums (cf. e.g. \cite{van1991state, sakamoto2008}). Once the stable manifold is obtained, the optimal control can be represented directly by it, without using the gradient of the solution of the HJB equation (\cite{sakamoto2008}). This approach can be viewed as an extension of LQ theory to nonlinear systems.

Computing stable manifolds for Hamilton-Jacobi (HJ) equations is generally a challenging task. In \cite{sakamoto2008}, the authors introduced an iterative procedure aimed at approximating the precise trajectories of the characteristic Hamiltonian system on the stable manifold. While this method has demonstrated effectiveness in generating effective feedback controls for specific initial conditions or near nominal trajectories, as demonstrated in \cite{sakamoto2013case, horibe2017optimal, horibe2018nonlinear, chen2021symplectic}, it can become time-consuming when applied to computing feedback controls for more general initial states or for high-dimensional state systems.

Numerous studies have been dedicated to numerically solving various HJ equations. See e.g. \cite{cacace2012patchy, darbon2016algorithms, kalise2018polynomial, jiang2016using, beard1997galerkin, mceneaney2007curse, ohtsuka2010solutions}. However, as pointed out in \cite{nakamura2019adaptive}, these existing methods may encounter several limitations, including heavy computational costs for high-dimensional systems, challenges in estimating solution accuracy for general systems, solutions being limited to a small neighborhood of a fixed point or nominal trajectory, and the necessity for special structures in the nonlinear terms of the system.

In recent years, various deep learning methods have emerged for efficiently solving HJ equations in large high-dimensional domains using grid-free sampling. For example, \cite{sirignano2018dgm} suggests utilizing neural networks (NN) to approximate solutions of the HJ equation by minimizing the residual of the PDEs and boundary conditions on a set of randomly sampled collocation points. \cite{han2018solving} and \cite{raissi2018forward} introduce deep learning method for solving PDEs, including HJB equations, by reformulating the PDEs as stochastic differential equations. \cite{kunish2021semiglobal} establishes a theory for employing NN to approximate optimal feedback laws on appropriate functional spaces.
\cite{nakamura2019adaptive} and \cite{kang2019algorithms} propose a causality-free data-driven deep learning algorithm for solving HJB equations, enhancing the effectiveness of NN training through adaptive data generation. Interested readers can refer to \cite{weinan2021algorithms} for a comprehensive review of this direction.

The aim of this paper is to seek semiglobal deep NN approximations of stable manifolds. The main contributions are as follows.
Firstly, we theoretically prove that, given appropriate accuracy assumptions, the control derived from the NN approximation of the stable manifold stabilizes the system and is sufficiently close to the exact optimal control (Theorem \ref{t:decay-xnn} in Section \ref{s:errors}).
Secondly, based on the theoretical results, we propose an algorithm for finding deep NN approximations of stable manifolds. One of the crucial aspects of this algorithm is a composite loss function that incorporates the maximum error, the mean error of the NN from the exact stable manifold on the sample set, and the error between the derivative of the NN at the origin and the stabilizing solution of the Riccati equation (as shown in equation \eqref{e:loss} below).
Another crucial issue is adaptive data generation by solving the characteristic Hamiltonian systems which is inspired by \cite{nakamura2019adaptive}. Specifically, we solve two-point boundary value problems (BVPs) locally near the equilibrium and extend the local solutions using initial value problems (IVPs) for the characteristic Hamiltonian system. We randomly choose a number of samples along each trajectory, and adaptively select additional samples near points with large errors from the previous round of training. Our approach is causality-free and does not depend on discretizing the space, making it suitable for high-dimensional problems. Causality-free algorithms have been successful in various applications. See e.g. \cite{kang2015causality,kang2017solving, kang2017mitigating, yegorov2018perspectives,chow2019algorithm, han2018solving}.

Our approach differs from those focused on solving the HJB equations, e.g., \cite{sirignano2018dgm, nakamura2019adaptive}. Our method is based on the stable manifold, an intrinsic geometric property of the HJB equation. With this framework, we can ensure the stability of the closed loop from the controller generated by the trained NN satisfying certain accuracy. There are few theoretical results on this topic in the literature. In empirical algorithms, the `equilibrium' of the closed loop system from the NN may become unstable or disappear as time goes to infinity, as shown in \cite{nakamura2022neural}. Moreover, our method is different from that in \cite{nakamura2022neural}, which devises certain architectures for approximate NN to stabilize the system. It is worth noting that in \cite{sakamoto2008, sakamoto2013case, chen2021symplectic}, the algorithms are based on an iterative procedure in a small neighborhood of the equilibrium, which is difficult to estimate the accuracy and is time-consuming to generate trajectories.


To demonstrate the effectiveness of our method, we present two applications: stabilizing the Reaction Wheel Pendulum (RWP) and optimal control of the parabolic Allen-Cahn (AC) equation. The NN training for both problems is performed on an ordinary laptop, demonstrating the practicality and efficiency of our approach. In particular, by simulation, the time to generate the control signal from the trained NN takes less than one millisecond on average. This implies that our method enables real-time control in practice.

The paper is structured as follows. Section \ref{s:stable} gives an overview of stable manifold method. Section \ref{s:errors} presents theoretical results. Section \ref{e:algorithm} outlines the deep learning algorithm. Sections \ref{s:pendulum} and \ref{s:AC} give the examples. Appendix \ref{a:proof} includes proof of Theorem \ref{t:decay-xnn}.

\section{The stable manifolds of the HJB equations}\label{s:stable}

In this section, we outline some basic results about the stable manifolds of the HJB equations from nonlinear control theory.
Although the stable manifold method can be applied to problems that are nonlinear in control, to emphasize our main point clearly, we focus on control-affine systems
\begin{eqnarray}\label{e:system}
\dot{x}=f(x)+g(x)u,\quad \mbox{in }\Omega,
\end{eqnarray}
where $g$ is Lipchitz continuous $n\times m$ matrix-valued function, $u$ is an $m$-dimensional feedback control, $\Omega\subset \mathbb R^n$ is a bounded domain with piecewise smooth boundary containing $0$.
Let the instantaneous cost be
$
L(x,u)=q(x)+\frac{1}{2}u^TWu,
$
where $W>0$ is an $m\times m$ symmetric matrix, $q(x)$ is a smooth nonnegative function. Define the cost functional by
$
J(x,u)= \int_0^{+\infty}L(x(t),u(t))dt.
$
The corresponding HJB equation is
\begin{equation}\label{e:HJB}
\nabla V^T(x)f(x)-1/2\nabla V^T(x)R\nabla V(x)+q(x)
=0.
\end{equation}
Here $\nabla V=(\frac{\partial V}{\partial x_1},\cdots, \frac{\partial V}{\partial x_n})^T$ is the gradient of the value function $V$, and
$$
R(x):=g(x)W^{-1}g(x)^T.
$$
The corresponding feedback control function
$
u(x)=-W^{-1}g(x)^T \nabla V(x).
$
Plugging it into \eqref{e:system}, we obtain the closed-loop system
\begin{eqnarray}\label{e:system-closed}
\dot{x}=f(x)-R(x)\nabla V(x),\quad \mbox{in }\Omega.
\end{eqnarray}
Throughout the paper, the following assumption will be made:
\begin{enumerate}
  \item[$(C_1)$] Assume that $f(x)$ and $q(x)$ are $C^{\infty}$ in $\Omega$. For $|x|$ small, $f(x)=Ax+O(|x|^2)$ and $q(x)=\frac{1}{2}x^TQx+O(|x|^3)$ with $A\in\mathbb R^{n\times n}$, and $Q\in \mathbb R^{n\times n}$ is symmetric. Function $g:\Omega\to\mathbb R^{n\times m}$ is $C^{\infty}$.
\end{enumerate}
Recall that a solution $V$ of \eqref{e:HJB} is said to be the \emph{stabilizing solution} if $\nabla V(0)=0$ and $0$ is an asymptotically stable equilibrium of \eqref{e:system-closed}.
The characteristic Hamiltonian system of \eqref{e:HJB} is
\begin{eqnarray}\label{e:Hamiltonian-flow}
\left\{\begin{array}{l}
         \dot{x}= f(x)-R(x)p\\
         \dot{p}= -(\dfrac{\partial f(x)}{\partial x})^Tp+\dfrac{1}{2}\dfrac{\partial (p^TR(x)p)^T}{\partial x}-(\dfrac{\partial q}{\partial x})^T.
       \end{array}
\right.
\end{eqnarray}
Assume $V$ is a stabilizing solution of \eqref{e:HJB}, let
$
\Lambda_V:=\left\{(x,p)\,\left|\,p=\nabla V(x)\right.\right\}.
$
Then
$\Lambda_V$ is invariant under the flow \eqref{e:Hamiltonian-flow} (see e.g. \cite{van1991state}). Note that $0$ is an equilibrium of \eqref{e:Hamiltonian-flow} and the Hamiltonian matrix at $0$ is
$
{\rm Ham}=\left(
                           \begin{array}{cc}
                             A & -R(0) \\
                             -Q & -A^T \\
                           \end{array}
                         \right).\notag
$
We assume the following condition:
\begin{enumerate}
  \item [$(C_2)$] ${\rm Ham} $ is hyperbolic and the generalized eigenspace $E_-$ for $n$-stable eigenvalues satisfies the complementary condition
  $
  E_-\oplus {\rm Im}\left(0, I_n\right)^T=\mathbb R^{2n},
  $ where $I_n$ is the identity matrix of dimension $n$.
\end{enumerate}
Then the following result for stable manifolds holds (\cite{van1991state}).

\begin{theorem}\label{t:stable-manifolds}
Assume that $f,q, R$ satisfy conditions $(C_1-C_2)$. Then the stable manifold of \eqref{e:Hamiltonian-flow} through $(x,p)=(0,0)$ is a smooth submanifold of dimension $n$ in $\mathbb R^{2n}$. Moreover, in a neighborhood of $(0,0)$, this submanifold is the graph $\Lambda_V$. In particular, there exist $\delta>0$ and $k>0$ such that for all $|x|<\delta$,
$
|p(x)-Px|\le k|x|^2,
$
where $P=\frac{\partial^2V}{\partial x^2}(0)$.
\end{theorem}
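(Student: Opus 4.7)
The plan is to invoke the classical stable manifold theorem for the equilibrium $0$ of the Hamiltonian system \eqref{e:Hamiltonian-flow} and then upgrade the resulting smooth invariant manifold to a Lagrangian graph of $\nabla V$ using the symplectic structure and the complementarity hypothesis in $(C_2)$.

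First I would linearize \eqref{e:Hamiltonian-flow} at the origin: by $(C_1)$ this linearization is exactly the Hamiltonian matrix $\mathrm{Ham}$, and by $(C_2)$ it is hyperbolic. Hence the stable manifold theorem produces an $n$-dimensional $C^\infty$ invariant manifold $\mathcal M$ tangent at $0$ to the stable generalized eigenspace $E_-$. The first key step is to show that $E_-$ is a Lagrangian subspace of $(\mathbb R^{2n},\omega)$ with $\omega=\sum dp_i\wedge dx_i$. This follows from the standard symplectic argument: the flow $e^{t\,\mathrm{Ham}}$ preserves $\omega$, so for any $v,w\in E_-$ one has $\omega(v,w)=\omega(e^{t\,\mathrm{Ham}}v,e^{t\,\mathrm{Ham}}w)\to 0$ as $t\to+\infty$ because both arguments decay to $0$. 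Thus $E_-$ is isotropic, and since $\dim E_-=n$, it is Lagrangian. Invariance of $\omega$ under the nonlinear flow then propagates the Lagrangian property from $T_0\mathcal M=E_-$ to all of $\mathcal M$, proving the first assertion.

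Next I would use the second part of $(C_2)$, namely $E_-\oplus\mathrm{Im}(0,I_n)^T=\mathbb R^{2n}$, which says precisely that the projection $\pi_x:\mathcal M\to\mathbb R^n$, $(x,p)\mapsto x$, has invertible differential at $0$. By the inverse function theorem there is a neighborhood $U$ of $0\in\mathbb R^n$ and a smooth map $\phi:U\to\mathbb R^n$ such that $\mathcal M\cap\pi_x^{-1}(U)=\{(x,\phi(x)):x\in U\}$. Because $\mathcal M$ is Lagrangian, the pullback to $U$ of the tautological one-form $\sum p_i\,dx_i$ equals $\sum\phi_i(x)\,dx_i$ and is closed; shrinking $U$ to a contractible neighborhood and applying the Poincaré lemma yields a smooth $V$ with $\nabla V=\phi$, so $\mathcal M$ locally coincides with $\Lambda_V$.

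Finally, to get the quadratic estimate $|p(x)-Px|\le k|x|^2$, I would expand $\phi$ in a Taylor series at $0$. Since $\phi(0)=0$ and $\phi$ is $C^\infty$, $\phi(x)=D\phi(0)x+O(|x|^2)$; setting $P:=D\phi(0)=\frac{\partial^2 V}{\partial x^2}(0)$ gives the stated bound on any ball contained in $U$. Symmetry of $P$ is automatic from $\phi=\nabla V$. One can further identify $P$ as the stabilizing symmetric solution of the algebraic Riccati equation $A^TP+PA-PR(0)P+Q=0$ obtained by linearizing \eqref{e:HJB}, but this identification is not required for the statement.

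The main technical obstacle is the Lagrangian step: proving that $E_-$ is Lagrangian and then transferring this to the nonlinear stable manifold. The projection and Poincaré lemma arguments are then essentially formal, and the quadratic remainder bound reduces to Taylor's theorem.
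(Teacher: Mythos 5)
The paper does not prove Theorem \ref{t:stable-manifolds} at all: it is quoted from van der Schaft \cite{van1991state} with no argument supplied, so there is no in-paper proof to compare against. What you have written is essentially the standard proof from the literature, and its skeleton is sound: hyperbolicity of $\mathrm{Ham}$ gives a smooth stable manifold tangent to $E_-$; the limit argument $\omega(v,w)=\lim_{t\to\infty}\omega(e^{t\,\mathrm{Ham}}v,e^{t\,\mathrm{Ham}}w)=0$ shows $E_-$ is Lagrangian; symplecticity of the nonlinear flow together with exponential decay of tangent vectors along stable trajectories (you should state this explicitly --- the nonlinear case needs the same limiting argument as the linear one, not merely invariance of $\omega$) makes all of $\mathcal M$ Lagrangian; the complementarity condition in $(C_2)$ makes $\mathcal M$ a graph $p=\phi(x)$ near $0$; the Poincar\'e lemma gives $\phi=\nabla V$; and Taylor's theorem gives $|\phi(x)-Px|\le k|x|^2$ with $P=D\phi(0)=\frac{\partial^2V}{\partial x^2}(0)$.

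The one genuine gap is that $\Lambda_V$ is defined in the paper as the graph of $\nabla V$ for $V$ a \emph{stabilizing solution of the HJ equation} \eqref{e:HJB}, and you never verify that the potential $V$ produced by the Poincar\'e lemma is such a solution. Two further observations close this. First, $H$ is a first integral of \eqref{e:Hamiltonian-flow}, every point of $\mathcal M$ flows to the origin, and $H(0,0)=0$; hence $H\equiv 0$ on $\mathcal M$, which is exactly $H(x,\nabla V(x))=0$ on the neighborhood where $\mathcal M$ is the graph of $\nabla V$. Second, the closed-loop field $f(x)-R(x)\nabla V(x)$ is the $x$-component of the Hamiltonian vector field restricted to $\mathcal M$, and trajectories on the stable manifold converge to $0$, so $0$ is asymptotically stable for the closed loop and $\nabla V(0)=0$; thus $V$ is stabilizing. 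With those additions your argument is complete and coincides with the proof in \cite{van1991state}.
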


Recall that $P$ is also the stabilizing solution of the Riccati equation (\cite{van1991state})
\begin{eqnarray}\label{e:riccati}
PA+A^TP-PR(0)P+Q=0,
\end{eqnarray}
which can be considered as quadratic approximation of the HJB equation \eqref{e:HJB} at $0$.
\begin{remark}\label{r:regularity}
From Theorem \ref{t:stable-manifolds}, the stabilizing solution $V$ of \eqref{e:HJB} is $C^{\infty}$ in a neighborhood of $0$ in $\mathbb R^n$. In this paper, we assume that $V$ is a $C^2$ function, and the graph representation of the stable manifold, $\Lambda_V$, is semiglobal, meaning that it holds in a properly large domain $\Omega$ containing the origin. It is challenging to determine the exact domain where the graph representation holds due to its close relationship with the regularity of $V$. Specifically, if $V$ is not differentiable at $x_0$, then the graph representation may not hold at $x_0$ (cf \cite{cannarsa2021singularities}).
\end{remark}

To find trajectories of \eqref{e:Hamiltonian-flow}, we solve a two-point BVP in a small neighborhood of the origin. Specifically, following \cite{sakamoto2008}, we set
$
\mathcal T=\left(
      \begin{array}{cc}
        I & S \\
        P & PS+I \\
      \end{array}
    \right),
$ and
$
\left(
  \begin{array}{c}
    \bar x \\
   \bar p \\
  \end{array}
\right)=\mathcal T^{-1}\left(
                    \begin{array}{c}
                      x \\
                      p \\
                    \end{array}
                  \right),\notag
$
where $S$ is the solution of the Lyapunov equation $(A-R(0)P)S+S(A-R(0)P)=R(0)$.
The system \eqref{e:Hamiltonian-flow} becomes
\begin{eqnarray}\label{e:Hamiltonian-flow-2}
\left\{\begin{array}{l}
         \dot{\bar x}=(A-R(0)P)\bar x+N_s(\bar x,\bar p) \\
         \dot{\bar p}=-(A-R(0)P)^T\bar p+N_{u}(\bar x,\bar p).
       \end{array}
\right.
\end{eqnarray}
Here $N_s(\bar x,\bar p), N_u(\bar x,\bar p)$ are the nonlinear terms except the linear terms in \eqref{e:Hamiltonian-flow}.
We solve \eqref{e:Hamiltonian-flow-2} with boundary condition
\begin{eqnarray}\label{e:bvp-n}
\bar x(0)=\bar x_0,\quad \bar p(+\infty)=0.
\end{eqnarray}
From \cite[Theorem 5]{sakamoto2008}, we have
\begin{prop}
Suppose conditions $(C_1)$-$(C_2)$ hold. For $\bar x_0$ sufficiently small, there exists unique trajectory of the two-point BVP \eqref{e:Hamiltonian-flow-2}-\eqref{e:bvp-n} contained in the stable manifold near $0$.
\end{prop}

\section{Asymptotic analysis of approximate closed-loop system}\label{s:errors}

This section provides an asymptotic analysis of the closed-loop system based on approximations of the stable manifold.
Hereafter, we assume that $f$, $q$, and $R$ satisfy conditions $(C_1)$-$(C_2)$, and that $V$ is the stabilizing solution of \eqref{e:HJB}.
In this paper, we use certain deep NN $p^{NN}(\theta,\cdot)$ to approximate $p(x)=\nabla V(x)$. The universal approximation theorem for NN guarantees that there exists NN approximation $p^{NN}(\theta,\cdot)$ that is sufficiently close to $p(x)$ if the number of parameters $\theta$ of the NN is large enough (see \cite{hornik1991approximation}, \cite{poggio2017}). The details of the NN will be presented in Section \ref{e:algorithm} below.
Recall that the exact optimal control is given by
\begin{eqnarray}\label{e:u*1}
u(x)=-W^{-1}g(x)^T p(x).
\end{eqnarray}
Subsequently, the approximation of the optimal control is
\begin{eqnarray}\label{e:uNN}
u^{NN}(x)=-W^{-1}g(x)^T p^{NN}(\theta, x).
\end{eqnarray}
Assume that $x(t)$ is the solution of
\begin{eqnarray}\label{e:x-closed}
\dot{x}=f(x)-R(x)p(x),\quad x(0)=x_0,
\end{eqnarray}
$x^{NN}(t)$ is the solution of the approximate closed-loop system
\begin{eqnarray}\label{e:x-closed-NN}
\dot{x}=f(x)-R(x)p^{NN}(\theta,x),\quad x(0)=x_0.
\end{eqnarray}
From Condition $(C_1)$, it holds that there exist constants $b>0$ and $k>0$ such that for all $|x|<b$,
\begin{eqnarray}\label{e:f-g-R}
&&|f(x)-Ax|<k|x|^2,\quad |g(x)-g(0)|<k|x|,\notag\\
&&|R(x)-R(0)|<k|x|.
\end{eqnarray}
Since $f,g,R$ are $C^{\infty}$ in $\Omega$, we have that $f,g,R$ are Lipschitz continuous in any compact subset of $\Omega$.
Moreover, Remark \ref{r:regularity} yields that $p$ is Lipschitz continuous in $\Omega$. Hereafter, without loss of generality, the Lipschitz constant of $f,g,R,p$ can be chosen as a unified constant $L$.

\begin{theorem}\label{t:x-decay}
Let $x(t)$ be the solution of \eqref{e:x-closed}. There exist constants $C>0$, $b>0$, $\alpha>0$ such that
$
|x(t)|\le C|x_0|e^{-\alpha t},
$
for $t\ge 0$ and $|x_0|\le b$.
\end{theorem}
\begin{proof}
From Theorem \ref{t:stable-manifolds} and \eqref{e:f-g-R}, there exist $k_0>0$ and $b>0$ such that
$
|f(x)-R(x)p(x)-(Ax-R(0)Px)|\le k_0|x|^2$,
for all $|x|<b.
$
Since all eigenvalues of $A-R(0)P$ has negative real part, the result holds from \cite[Theorem 2.77]{chicone2006ordinary}.
\end{proof}

\begin{remark}\label{r:decay}
Let $B=A-R(0)P$. There exists a constant $\beta>0$ such that
$
|e^{Bt}x_0|\le |x_0|e^{-\beta t}.
$
Hence $\alpha\to \beta^-$ as $b\to 0$ in Theorem \ref{t:x-decay}.
\end{remark}
\begin{cor}\label{e:x-decay-T}
Let $\Psi\subset\Omega$ be any compact set. For any $\varepsilon>0$, by Theorem \ref{t:x-decay}, there exists a constant $T_{\varepsilon}>0$ such that
$
|x(t)|<\varepsilon/2,\,\forall t\ge T_{\varepsilon},
$ for all $x_0\in \Psi$.
We call $T_{\varepsilon}$ an \emph{admissible time} of $x$ with respect to $\varepsilon$.
\end{cor}
\begin{proof}
From Theorem \ref{t:x-decay} and Remark \ref{r:regularity}, it holds that for any $x_0\in \Psi$, there exists $T_{\varepsilon, x_0}>0$ depending on $\varepsilon$ and $x_0$ such that
$
|x(t)|<\varepsilon/2,\,\forall t\ge T_{\varepsilon,x_0}.
$
Furthermore, using the continuous dependence of solutions on initial data of ODE and the compactness of $\Psi$, we have the existence of uniform constant $T_{\varepsilon}>0$ independent on $x_0$ such that $
|x(t)|<\varepsilon/2,\,\forall t\ge T_{\varepsilon}.
$
\end{proof}
For $x^{NN}(t)$, we have the following asymptotic result whose proof will be included in Appendix below.
\begin{theorem}\label{t:decay-xnn}
Let $P$ be the stabilizing solution of the Riccati equation \eqref{e:riccati}, let $\gamma_0>0$ be a fixed constant, and let $\Psi\subset\Omega$ be any compact set. For any $\varepsilon\in (0,\gamma_0)$, there exist $\delta>0$ and $\eta>0$ such that if $p^{NN}(\theta,\cdot)$ satisfies the following conditions:
(a) $|p^{NN}(\theta,x)-p(x)|<\delta$ for all $x\in \Omega$;
(b) $|p^{NN}(\theta,x)-Px|\le \eta|x|$ for $|x|<\gamma_0$,
then, $|x^{NN}(t)-x(t)|<\varepsilon$ for all $t>0$, and $x^{NN}(t)$ (resp. $x(t)$) decays exponentially as $t\to+\infty$. Here, $x^{NN}(t)$ is the solution of \eqref{e:x-closed-NN} (resp. the solution of \eqref{e:x-closed}) with arbitrary $x_0\in \Psi$. Consequently, $|J(x^{NN},u^{NN})-J(x,u)|<C\varepsilon$ where $C$ is a constant depending only on $f,g,R,p, W$.
\end{theorem}


\section{Algorithm}\label{e:algorithm}
In this section, we propose a deep learning algorithm based on the theoretical result in Theorem \ref{t:decay-xnn} to find $p^{NN}(\theta,\cdot)$ that is sufficiently close to $p(\cdot)$ in the sense of conditions (a)-(b) in Theorem \ref{t:decay-xnn}.

\subsection{Architecture of the NN}
To achieve certain accuracy, the number of parameters of the NN depends essentially on its architecture. For example, the complexity of deep compositional networks utilizing standard ReLUs for approximating Lipschitz continuous functions with a given accuracy depends linearly on the dimension $n$.  See e.g. \cite[Theorem 4]{poggio2017}. In contrast, the complexity required by an NN with only one hidden layer to achieve a similar level of accuracy grows exponentially with the dimension $n$. This phenomenon is known as the curse of dimensionality for the complexity of the NN.  See e.g. \cite[Theorem 1]{poggio2017}. In this paper, rather than delving further into the architecture of the NN, we \emph{empirically} use a deep NN of LSTM type with smooth activation function $\sin(\cdot)$ (see e.g. \cite{hochreiter1997long},\cite{sirignano2018dgm}). However, we should emphasize that other types of deep NN may also work well. As in \cite{sirignano2018dgm, nakamura2019adaptive, Onken2022Neural}, to mitigate the curse of dimensionality, we focus the investigation on control aspect and develop the algorithm based on gridfree method.


In the following, we utilize an LSTM type NN, $p^{NN}_o(\theta; x)$, as in \cite[Section 4.2]{sirignano2018dgm}.  Suppose input $x$ and output $p$ are $n$-dimensional, the number of hidden layers is $l$ and each layer has $m$ units. Different from \cite[Section 4.2]{sirignano2018dgm}, we use $\sigma:\mathbb R^m\to \mathbb R^m$, $\sigma(z)=(\sin(z_1),\cdots,\sin(z_m))$ as the activation function.
The derivative of $\sin(\cdot)$, namely $\cos(\cdot)$, has more global support than the derivatives of the traditional activation functions such as sigmoid.
Moreover, we modify the original NN by
$
p^{NN}(\theta,x)=p^{NN}_o(\theta,x)-p^{NN}_o(\theta,0).
$
Then $p^{NN}(\theta,0)=0$, which is a necessary condition for $p^{NN}$ to satisfy condition (b) in Theorem \ref{t:decay-xnn}.

To fit our theoretical conclusion in Section \ref{s:errors}, we define the loss function as follows: for $\nu\in [1,\infty]$,
\begin{eqnarray}\label{e:loss}
&&\mathcal L^\nu(\theta;\mathcal D)
:=\sigma_1\left[\frac{1}{|\mathcal D|}\sum_{i=1}^{|\mathcal D|}\|p_i-p^{NN}(\theta; x_i)\|^\nu\right]\\
&&~~+\sigma_2\max_{p_i\in \mathcal D}|p_i-p^{NN}(\theta; x_i)|
+\sigma_3\left\|\frac{\partial p^{NN}}{\partial x}(\theta,0)-P\right\|,\notag
\end{eqnarray}
where $|\cdot|$ denotes the standard Euclidean norm in $\mathbb R^n$, $\|\cdot\|$ is the operator norm of the matrix, $|\mathcal D|$ is the number of samples in $\mathcal D$, and $\sigma_i>0$ for $i=1,2,3$ are weight constants.
The first term measures the mean error with exponent $\nu$ between the NN predicted value $p^{NN}(\theta; x_i)$ and the standard value $p_i$ on dataset $\mathcal D$; the second term enforces that the maximum difference between the predicted and observed values is small; and the third term ensures that the derivative of the NN function at $0$ is close to the stabilizing solution $P$ of the Riccati equation \eqref{e:riccati}. The weight constants $\sigma_i$ control the relative importance of these terms in the overall loss function.


\subsection{The algorithm}\label{s:algorithm}
Inspired by \cite{nakamura2019adaptive}, we propose a deep learning algorithm based on adaptive data generation on the stable manifold. The sketch of the algorithm is as follows.

\emph{Step 0. Transformation of the model.}  We begin by rescaling the characteristic Hamiltonian system \eqref{e:Hamiltonian-flow}. The necessity of such a transformation is explained in Subsection \ref{s:rescaling} below.

\emph{Step 1. First generation of trajectories and sampling.}
We find a certain number of trajectories on the stable manifold by solving two-point BVP near the equilibrium and extending the local trajectories by IVP as described in Subsection \ref{s:bvp} below. Then we pick out some samples on each trajectory to obtain a training set $
\mathcal D_1:=\{(x_i,p_i)\}_{i=1}^{N_1}
$ as in Subsection \ref{s:exp} below.
Similarly, we also generate a set of samples, $\mathcal D^{\rm val}$, for validation of the trained NN later.

\emph{Step 2. First NN training.} We train an LSTM type NN, $p^{NN}(\theta,\cdot)$, with parameters $\theta$, on $\mathcal D_1$  satisfying
$
p^{NN}(\theta, x_i)\approx p_i,\, i=1,\cdots, N_1,
$
for some epochs.
If $p^{NN}(\theta,\cdot)$ satisfies ${\mathcal L^{\nu}}(\theta,\mathcal D_1)<\varepsilon$ for empirically chosen $\varepsilon$, then we check $p^{NN}(\theta,\cdot)$ on $\mathcal D^{\rm val}$ to obtain the test error.

\emph{Step 3. Adaptive data generation.} If the test error is not good enough, then we generate more samples near the points with relatively large errors from the first round training. Adding these new samples to $\mathcal D_1$, we obtain a larger sample set $\mathcal D_2$. For details see Subsection \ref{s:ag} below.

\emph{Step 4. Model refinement.} Based on the NN obtained in Step 2, we continue training the NN on the updated data set $\mathcal D_2$ as in Step 2. The training procedure stops once the NN satisfies a Monte Carlo test as in Subsection \ref{s:Monte-Carlo} below.

\emph{Step 5. Approximate optimal feedback control.} From $p^{NN}(\theta,\cdot)$, we can get the approximate optimal feedback control $u^{NN}$ \eqref{e:uNN} and compute the closed-loop trajectories at certain initial conditions $x(0)=x_0$ by solving \eqref{e:x-closed-NN}.

\begin{remark}
To enhance the performance and to prove the convergence of the procedure in the sense of mean square error (MSE) while minimizing computational costs, we can utilize a sample size selection scheme based on the sample variances of the training sets $\mathcal D_i$, $i=1,2,\cdots$, following the approach proposed in \cite[Section 4.1]{nakamura2019adaptive}.
\end{remark}

\subsection{Key techniques of the algorithm}\label{s:tech}
As mentioned above, our approach is to find NN approximation of the stable manifold, which differs from the method in \cite{nakamura2019adaptive}. Moreover, to improve the effectiveness of our algorithm, we incorporate several useful techniques as follows.
\subsubsection{Generating trajectories by two-point BVP and IVP}\label{s:bvp}
We first use \\
`scipy.integrate.solve\_bvp' to solve the two-point BVP \eqref{e:Hamiltonian-flow-2}-\eqref{e:bvp-n} for $x_0\in \partial B_r(0)$, where $r>0$ is small, and $\partial B_r(0)$ denotes the sphere centered at $0$ with radius $r$. This BVP solver implements a 4th-order collocation method with control of residuals (see \cite{jones2001scipy}).
The BVP solver requires an initial mesh for time and an initial guess for the solution values at each mesh node. The iterative procedure of the solver may diverge if the initial guess is not well chosen. For problem \eqref{e:Hamiltonian-flow-2}-\eqref{e:bvp-n}, the boundary condition $x_0$ are chosen on a small ball $\partial B_r(0)$, and the initial guess is constantly $0$ at each mesh node.
To select an appropriate radius $r$, we employ a Monte Carlo method. Specifically, we first choose a small $r$, randomly take some points on the ball $\partial B_r(0)$, then solve \eqref{e:Hamiltonian-flow-2}-\eqref{e:bvp-n}. If the success rate of the BVP solver is $100\%$, then we choose a slightly larger $r$. This procedure continues until the success rate of the BVP solver is not $100\%$.

We then extend the local solutions, $(\tilde x(t), \tilde p(t))$, $t\ge 0$, obtained by the BVP solver. On some interval $(T_-,0]$, $T_-<0$, we solve the following IVP:
\newpage 

\begin{eqnarray}\label{e:Hamiltonian-flow-ivp}
&&\left\{\begin{array}{l}
         \dot{x}= f(x)-R(x)p,\\
         \dot{p}= -(\dfrac{\partial f(x)}{\partial x})^Tp+\dfrac{1}{2}\dfrac{\partial (p^TR(x)p)^T}{\partial x}-(\dfrac{\partial q}{\partial x})^T,
       \end{array}
\right.\notag\\
&& \mbox{ with }x(0)=\tilde x(0), p(0)=\tilde p(0).
\end{eqnarray}
To solve this IVP, we can use `scipy.integrate.solve\_ivp' with various methods (e.g., `RK45', `Radau', etc.), as well as symplectic methods, variational integrators. Comparing to the BVP solver, the IVP \eqref{e:Hamiltonian-flow-ivp} is usually much easier to solve. To approximate $T_-$, we first choose a value $T_-^0<0$ and check whether the IVP solver successfully computes the solution for all initial conditions $(\tilde x(0),\tilde p(0))$ on $(T_-^0,0]$. If the success rate is $100\%$, we then choose a slightly smaller value $T_-^1<T_-^0<0$, and repeat the procedure until the IVP solver fails at some initial conditions.

\subsubsection{Sampling along trajectories}\label{s:exp}
Since Theorem \ref{t:x-decay} and Remark \ref{r:decay} yield that the decay of the trajectory $x(t)$ is almost $e^{-\beta t}$ as $t\to +\infty$, we empirically select samples on the trajectories near the origin according to an exponential distribution
$
\rho(t)=\frac{1}{\lambda} e^{-\frac{t}{\lambda}}
$ for $t>0$ with $\lambda=\beta_{\min}^{-1}$. Here $\beta_{\min}$ is the distance between the set of eigenvalues and the imaginary axis. That is, if we take $t_0,t_1,t_2,\cdots$ from the exponential distribution, then $(x(t_0),p(t_0)),(x(t_1),p(t_1)),(x(t_2),p(t_2)),\cdots$ lie on the stable manifold.
Moreover, we select a certain number of samples on $(T_-,0]$ according to uniform distribution.

\subsubsection{Adaptive data generation}\label{s:ag}
After the first round training, we record the absolute errors of $p^{NN}(\theta,\cdot)$ on $\mathcal D_1$ as
$
|p^{NN}(\theta, x_i)- p_i|,\, i=1,\cdots, N_1.
$
We then select the largest $[\mu N_1]$ points on $\mathcal D_1$, where $[y]$ is the integer part of $y$ and $\mu\in (0,1)$ is chosen as in \cite[Section 4.1]{nakamura2019adaptive}. Denote the set of these samples by $\mathcal {\hat D}_1$. We randomly sample $J_1$ points $y_j\in \Omega$ ($j=1,\cdots, J_1$) around $x_i$ with $(x_i,p_i)\in \mathcal {\hat D}_1$ according to some Gaussian distribution, then find solutions of \eqref{e:Hamiltonian-flow-2}-\eqref{e:bvp-n} with $x(0)=y_j$ by BVP solver. Here, the initial guess of the solution is the trajectory, $(x_i(t),p_i(t))$, $t\in (T_-,\infty)$, where $(x_i,p_i)$ lies on. We choose $L_1$ samples on each new trajectory according to a certain Gaussian distribution for $t>0$. Adding these new samples in $\mathcal D_1$, we obtain a larger sample set $\mathcal D_2$.

\subsubsection{Monte Carlo test of the NN} \label{s:Monte-Carlo}
To obtain the error bounds as in Theorem \ref{t:decay-xnn}, we use a Monte Carlo test. Specifically, we randomly select a certain number of initial points according to uniform distribution in $\Omega$. Using the trained NN, $p^{NN}(\theta,\cdot)$, we generate feedback control $u^{NN}$ as \eqref{e:uNN}, then check if the closed-loop trajectories is stabilized at these initial points by solving \eqref{e:x-closed-NN}. If the trained NN works well, then the test error gives the error bounds in Theorem \ref{t:decay-xnn}.

\subsubsection{Rescaling} \label{s:rescaling}
For concrete applications, the original formulation of models may not be suitable for numerical methods. Hence, modifications, such as coordinate transformations, should be made at the beginning of the algorithm. In our algorithm, we use rescaling, which also appears in computational optimal control (see, e.g., \cite{ross2018scaling, ross2007low}).
In \eqref{e:Hamiltonian-flow}, we apply a rescaling
$
(\hat x_1,\hat x_2,\cdots,\hat x_n)=(\lambda_1\bar x_1,\lambda_2\bar x_2,\cdots,\lambda_n\bar x_n)
$
so that the variables $(\hat x_1,\hat x_2,\cdots,\hat x_n)$ (and their derivatives) have the same orders of magnitude. There are two advantages of rescaling. Firstly, after rescaling, the proportion of convergent rate by the BVP solver increases. As mentioned, we set constant on mesh nodes as initial guess of the solution in `scipy.integrate.solve\_bvp'. Hence if the value and derivative of the exact solution are too large, then the BVP solver may diverge. Secondly, rescaling enhances the effectiveness of NN training. The loss function \eqref{e:loss} indicates that when the magnitudes of $x_1,x_2,\cdots,x_n$ differ significantly, certain variables in \eqref{e:loss} may carry more weight, while others may have relatively smaller weights and could potentially be negligible. Figure \ref{f:closed-loop2} in Section \ref{s:pendulum} below shows essence of the rescaling.

\section{Application to the Reaction Wheel Pendulums}\label{s:pendulum}

The RWP is a mechanical system that consists of a physical pendulum with a rotating disk (see Figure \ref{f:pendulum}). Researchers have been interested in the swing up and stabilization of various pendulums for the past two decades (see, e.g., \cite{sakamoto2013case, horibe2017optimal, block2007reaction}).

\begin{wrapfigure}[10]{r}{8em} %
    \begin{center}
        \includegraphics[width=0.23\textwidth]{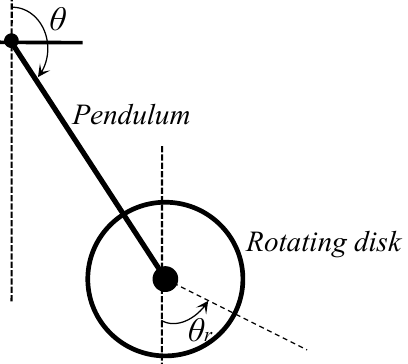}
        \vspace{-0.4cm}
        \caption{\footnotesize The Reaction Wheel Pendulum}
        \label{f:pendulum}
    \end{center}
\end{wrapfigure}
The ideal dynamical system of the RWP is given by
$\dot{x}_1= x_2,\,\dot{x}_2=a\sin x_1-b_pu,\,\dot{x}_3=b_ru.$
Here $u$ is an input function,
$x_1=\theta, x_2=\dot{\theta}, x_3=\dot{\theta}_r.$ we borrow the parameters of instrument in \cite[Page 21]{block2007reaction}:
$
a=78.4, \,b_p=1.08, \,b_r=198.
$
Define a rescaling by
$x_1= \lambda_1\bar x_1,\,x_2= \lambda_2\bar x_2,\,x_3=\lambda_3\bar x_3$.
Then the system becomes
$\dot{\bar x}_1=\frac{\lambda_2}{\lambda_1} \bar x_2,\,\dot{\bar x}_2=\frac{a}{\lambda_2}\sin (\lambda_1\bar x_1)-\frac{b_p}{\lambda_2}u,\,
    \dot{\bar x}_3=\frac{b_r}{\lambda_3}u.$
Let $k=\frac{\lambda_2}{\lambda_1}=\sqrt{a}$ and $\sigma=\frac{b_p}{\lambda_2}=\frac{b_r}{\lambda_3}$. We choose $\lambda_1=1$ since we are mainly concerned with $x_1\in [-\pi,\pi]$. Then $\lambda_2=\sqrt{a}$ and $\lambda_3=\frac{b_r}{b_p}\lambda_2$. With a little abuse of notations, we still use $(x_1,x_2,x_3)$ instead of $(\bar x_1,\bar x_2,\bar x_3)$ for simplicity.
Then $k=\lambda_2\approx8.85$, $\lambda_3\approx1623.30$, $\sigma=\frac{b_p}{\lambda_2}\approx 0.12$.
Letting $x=(x_1,x_2,x_3)^T$, $f(x)=(kx_2,k\sin x_1,0)^T$, $g(x)=(0,-\sigma,\sigma)^T$, the system has form \eqref{e:system}.
Define the instantaneous cost as
$
L(x,u)=\frac{1}{2}(x^Tx+0.01u^2).
$
We implement an LSTM type NN, $p^{NN}(\theta,\cdot)$, in PyTorch with $l=3$ and $m=50$.

We train the NN as the algorithm in Section \ref{s:algorithm}.

1) \emph{First sampling, training and validation:}
Set the error tolerance of \\
`scipy.integrate.solve\_bvp' to be $10^{-7}$. The infinite interval $[0,+\infty)$ can be replaced by $[0,20]$ to achieve numerical accuracy (since $\beta_{\min}=1.2$). The initial mesh of $t$ is
$
0, h, \cdots, 100 h,
$
with $h=0.2$, and the initial guess of the solution is $0$ at all these nodes.
We randomly select 200 points $x_i\in\partial B_{0.5}(0)$ according to the uniform distribution. With these points as boundary conditions $x_0$ for the BVP solver, we solves \eqref{e:Hamiltonian-flow-2}-\eqref{e:bvp-n} successfully.
Next, we solve the IVP \eqref{e:Hamiltonian-flow-ivp} using `scipy.integrate.solve\_ivp' with `method=Radau', `rtol=$10^{-5}$' (relative tolerance), and `atol=$10^{-7}$' (absolute tolerance). Monte Carlo test infers $T_-=-0.2$. Thus 200 trajectories are obtained on the stable manifold.
Finally, we select $5$ points in $[0,20]$ obeying the exponential distribution with $\lambda=0.83$, and choose $20$ points on $[-0.2,0]$ according to the uniform distribution. Then, we pick out the samples $(x_i,p_i)$ such that $x_i\in\Omega:=[-3.2,3.2]\times [-3.2,3.2]\times[-3.2,3.2]$. The set of these samples is denoted by $\mathcal D_1^{\rm train}$, with $|\mathcal D_1^{\rm train}|=4363$.
Similarly, solving $50$ trajectories on the stable manifold yields a validation set $\mathcal D^{\rm val}$ with size 1081.

We train the NN using the internal optimizer Adam in PyTorch with $6000$ epochs and learning rate $lr=0.001\times 0.5^{[j/1000]}$ where $j$ is the epoch. Set $\nu=1$, $\sigma_1=1.0$, $\sigma_2=\sigma_3=0.01$ in \eqref{e:loss}. After this round of training, the loss \eqref{e:loss} on $\mathcal D_1^{\rm train}$ is $7.5\times 10^{-3}$, and the test loss on $\mathcal D^{\rm val}$ is $1.09\times 10^{-2}$.
The running time of the training is about 250 seconds on a ThinkPad T480s laptop without GPU.

\begin{figure}[htbp]
\vspace{-0.35cm}
\begin{center}
\subfigure{
\includegraphics[width=0.40\textwidth]{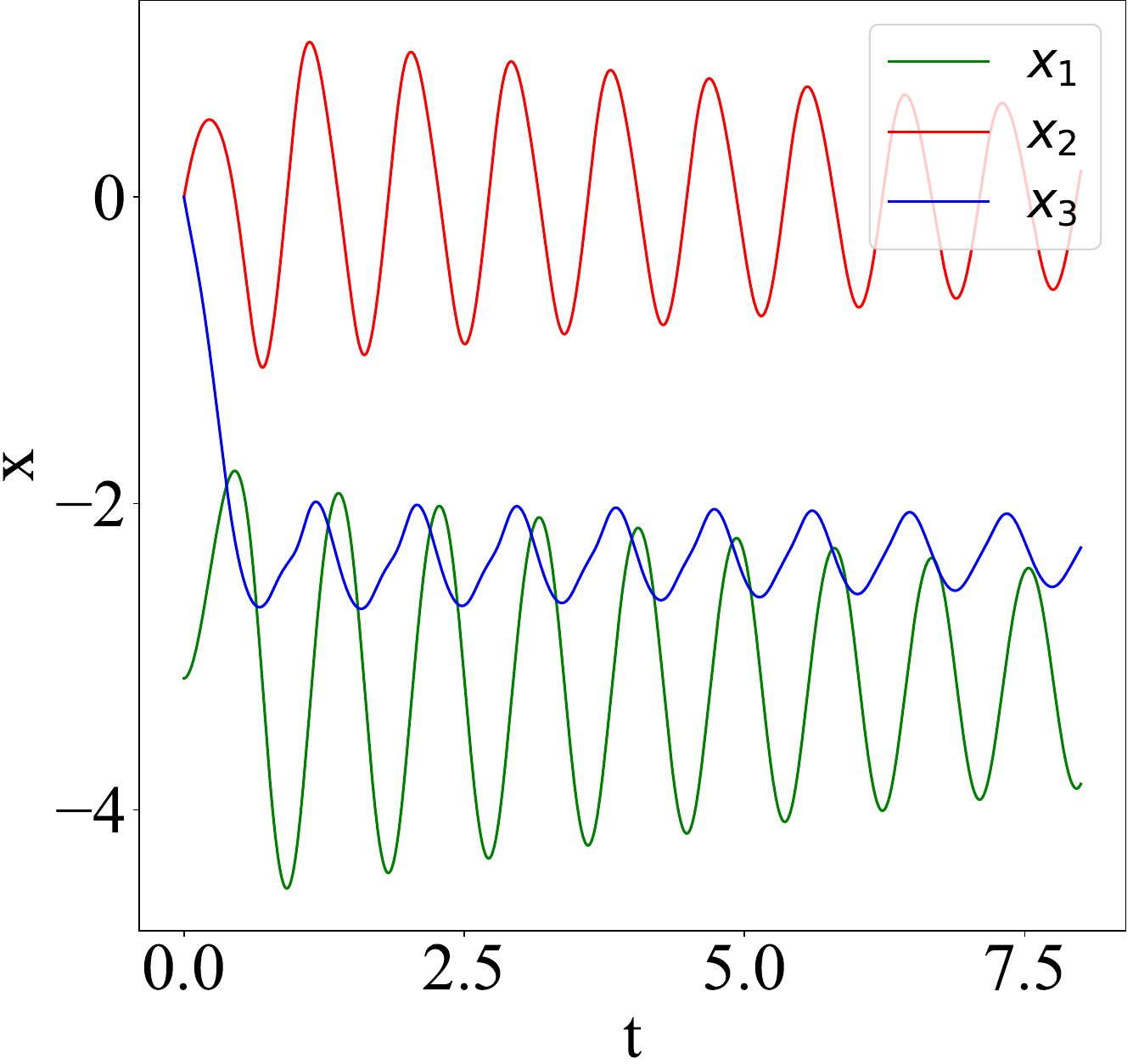}}
\subfigure{
\includegraphics[width=0.40\textwidth]{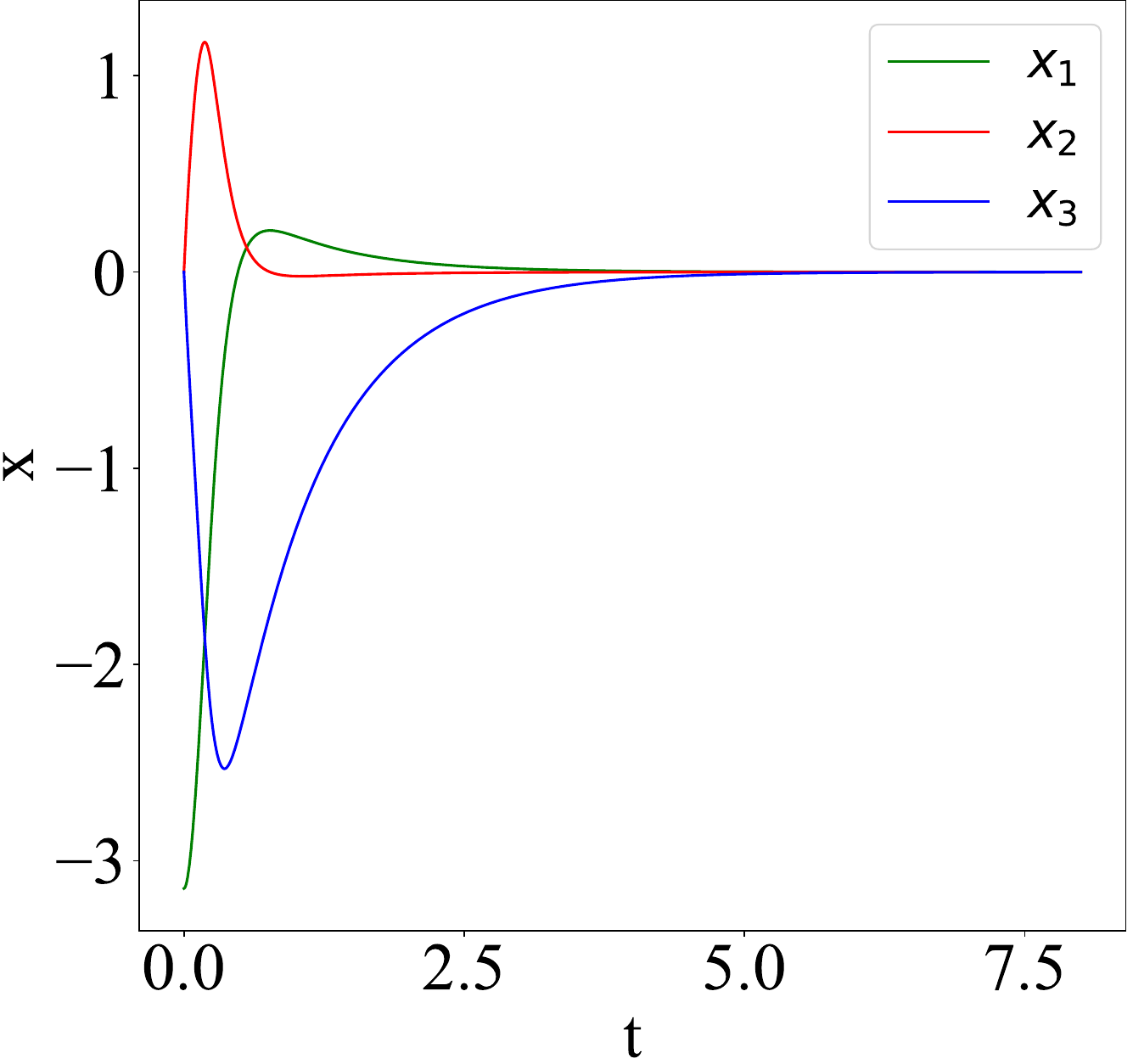}}
\end{center}
\hfill
\vspace{-0.8cm}
\caption{\footnotesize Simulations with initial states at hanging position based on first round training (left subfigure) and second round training (right subfigure).}
\label{f:round-1}
\end{figure}

2) \emph{Adaptive sampling and refinement of the NN:}
After the first round training, we find that the NN does not work well at some points, see e.g. Figure \ref{f:round-1}, by Monte Carlo test in Subsection \ref{s:Monte-Carlo}. To improve the NN, we adaptively generate samples as in Subsection \ref{s:ag} with $\sigma=0.1$, $J_1=5$ and $L_1=3$.
Then we add the new samples in $\mathcal D_1^{\rm train}$, obtaining updated training set $\mathcal D_2^{\rm train}$ with a size of 10652.
Based on the NN trained after the first round, we continue to train $p^{NN}(\theta,\cdot)$ on $\mathcal D_2^{\rm train}$ for 6000 epochs with learning rate $lr=2\times 10^{-4}\times 0.5^{[j/1500]}$, where $j$ denotes the epoch. The training time is approximately 1000 seconds. Training loss on $\mathcal D_2^{\rm train}$ is $7.3\times 10^{-3}$, and the test loss is $9.3\times 10^{-3}$. Monte Carlo test in Subsection \ref{s:Monte-Carlo} shows the NN works well.

3) \emph{Simulations:}
Numerical simulations are based on the NN generated feedback control \eqref{e:uNN}. Figure \ref{f:closed-loop-sim} shows the closed-loop trajectories at some states from \eqref{e:x-closed-NN}. Moreover, we compare the cost of the stable manifold (SM) method with the classical LQR and the optimal control obtained from the BVP solver. Table \ref{t:comparison} shows the costs at several points. It is evident that the cost of the stable manifold method is much smaller than that of LQR and is very close to the optimal control obtained from the BVP solver when $|x_0|$ is relatively large, whereas the difference between the three costs is small when $x_0$ is near the origin. Note that here we obtain the optimal control by using $(x^{NN}(t),p^{NN}(t))$ generated by the NN as the initial guess to solve the BVP \eqref{e:Hamiltonian-flow-2}-\eqref{e:bvp-n} successfully.

\begin{figure}[t]
\begin{center}
\subfigure[$x_0=(-\pi,0.5,0.3)$]{
\includegraphics[width=0.4\textwidth]{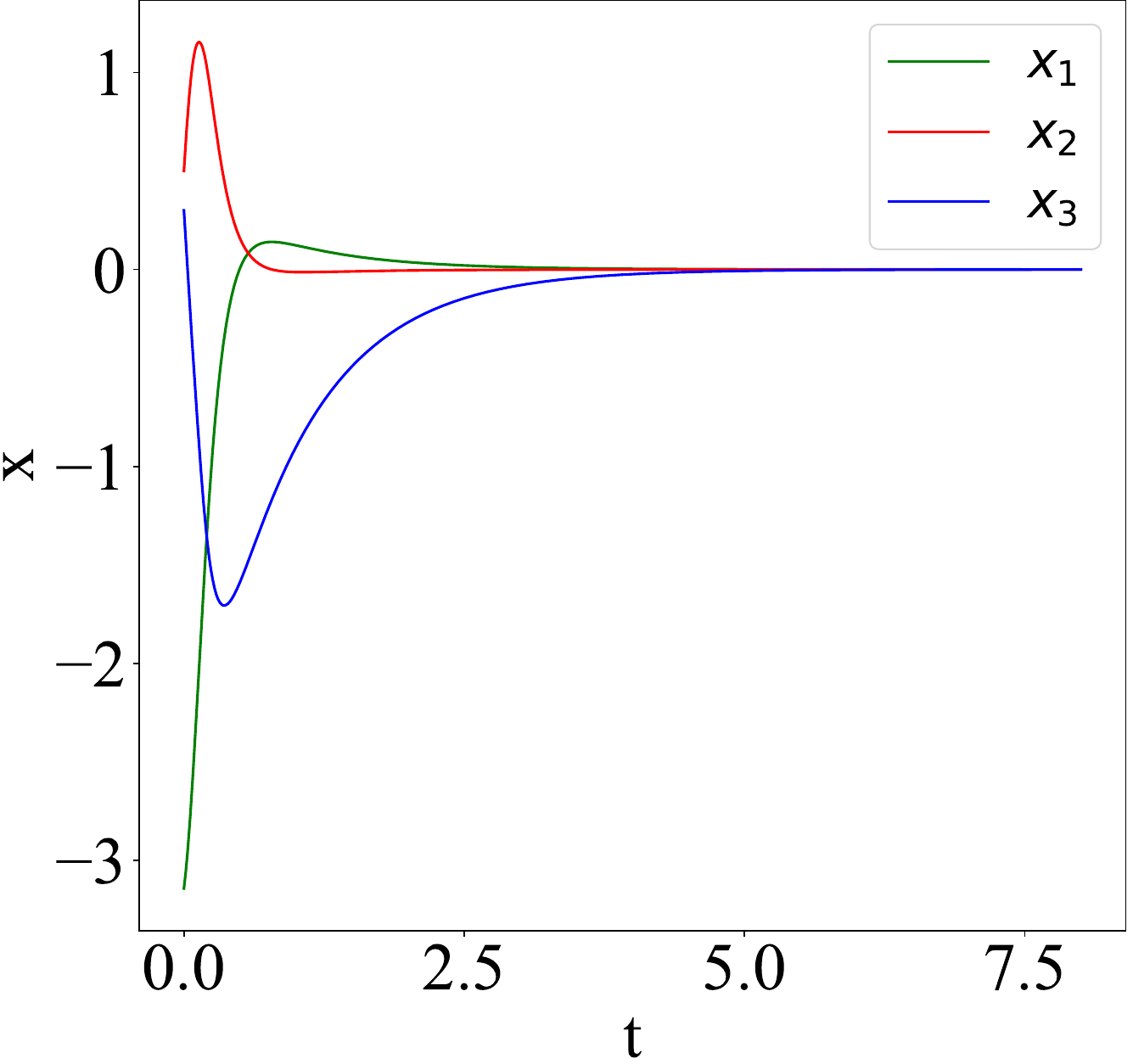}}
\subfigure[$x_0=(-0.3\pi,-0.78,0.26)$]{
\includegraphics[width=0.4\textwidth]{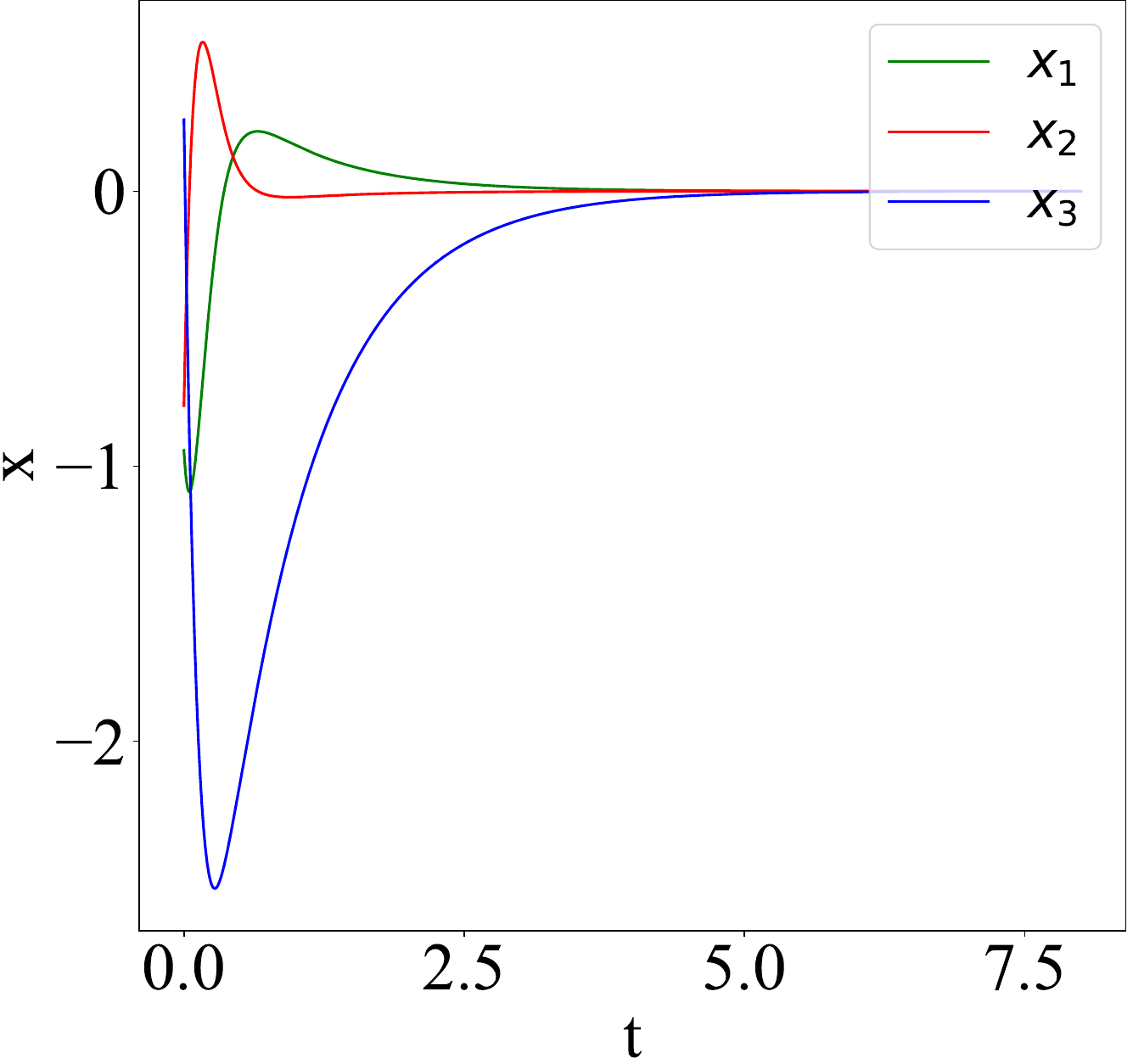}}
\end{center}
\hfill
\vspace{-0.5cm}
\caption{\footnotesize Closed-loop stabilizing trajectories of the Reaction Wheel Pendulum with some initial positions $x_0$ after the second round training.}
 \label{f:closed-loop-sim}
\end{figure}




\begin{table}[htbp]
\centering
\vspace{-0.2cm}
\caption{Comparison of the costs at certain points}
\vspace{0.0cm}
\begin{tabular}{cccc}
\hline
Initial positions & SM method & LQR & Optimal control\\
\hline
$(\pi,0,0)$ & 12.4 & 38.0 & 11.7\\
$(-0.8\pi,0.1,0.2)$ & 14.2 & 25.0 &14.0\\
$(-0.6\pi,-0.2,-0.4)$ & 20.5 & 25.5 & 20.4\\
\hline
$(0.1\pi,0,0)$ & 0.8096 & 0.8115 & 0.8096\\
$(0.2,0.03,-0.08)$ & 0.4055 & 0.4060 &0.4055\\
\hline
\end{tabular}\label{t:comparison}
\vspace{-0.2cm}
\end{table}
Additionally, Figure \ref{f:closed-loop2} shows that the system without rescaling at the beginning cannot generate good NN feedback controllers.
\begin{figure}[htbp]
\begin{center}
\subfigure{
\includegraphics[width=0.7\textwidth]{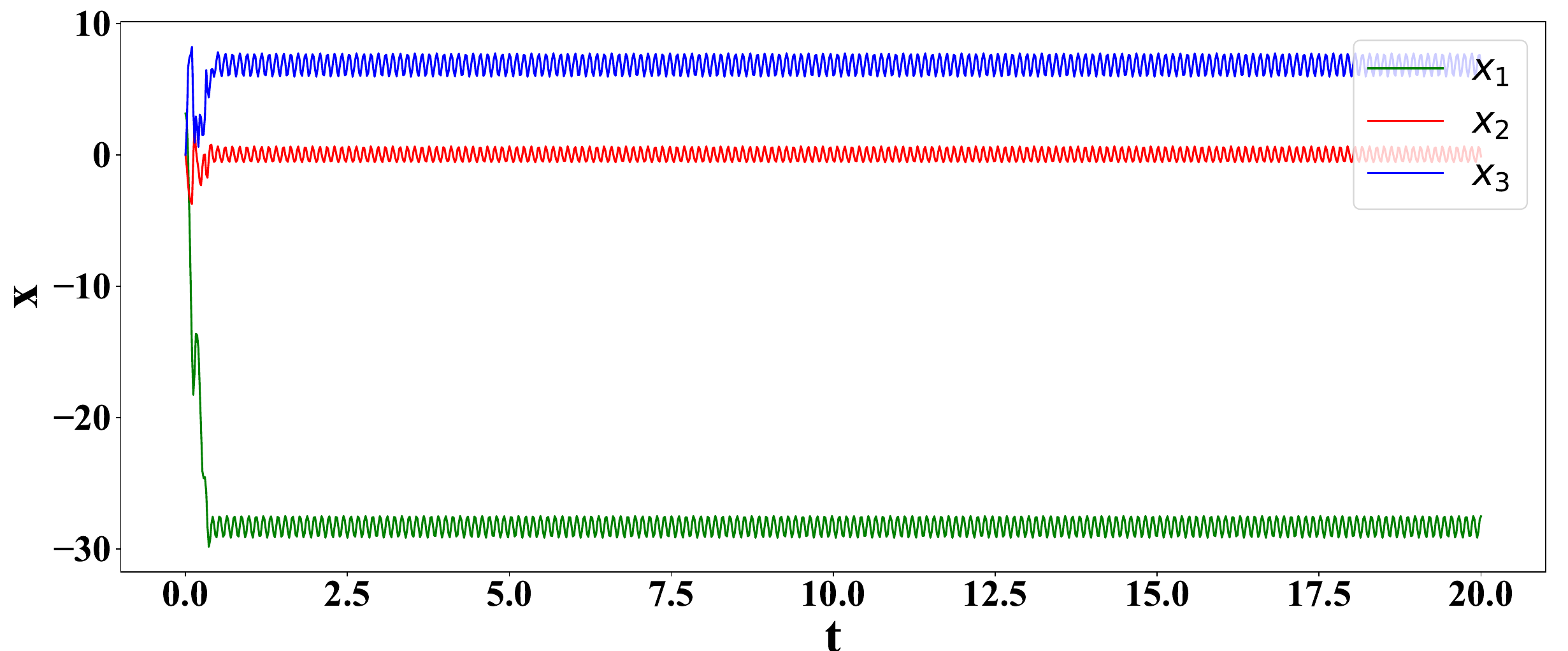}}
\end{center}
\hfill
\vspace{-0.8cm}
\caption{\footnotesize Sample closed-loop trajectory at $x_0=(\pi, 0, 0)$ after the second round of training without rescaling at the beginning. }
 \label{f:closed-loop2}
 \vspace{-0.3cm}
\end{figure}

\section{Application to optimal control for the parabolic Allen-Cahn equation}\label{s:AC}
To illustrate the effectiveness of our algorithm for high-dimensional problems, we give an application to optimal control of the parabolic Allen-Cahn (AC) equation as follows,
\begin{eqnarray}
&&\partial_t \mathcal X(\gamma,t)=\sigma \partial_{\gamma\gamma}\mathcal X(\gamma,t)+\mathcal X(\gamma,t)-\mathcal X(\gamma,t)^3+u,\notag\\
&&~~~~~~~~~~~~~~~~~~~~~~~~~~~~~~~~~~~~~~~~\mbox{ in }\mathcal I\times \mathbb R^+,\label{e:parabolic1}\\
&&\mathcal X(-1,t)=0,\quad  \mathcal X(1,t)=0, \quad t\in \mathbb R^+,\label{e:parabolic2}\\
&&\mathcal X(\gamma,0)=\mathcal X_0,\quad \gamma\in \mathcal I.\label{e:parabolic3}
\end{eqnarray}
Here $\mathcal I=[-1,1]$. The cost function is
$$
J(u, \mathcal X_0):=\frac{1}{2}\int_{0}^\infty \left(\|\mathcal X(\cdot, t)\|^2_{L^2(\mathcal I)}+\|u(t)\|_{L^2(\mathcal I)}^2\right)dt.
$$
The AC equation models a phase separation process (\cite{du2020phase}). The optimal control of AC equation is a typical infinite dimensional control problem (\cite{colli2015optimal, chrysafinos2022analysis}). In this example, we approximate the AC equation by a high-dimensional system.
Let $N$ be an integer greater than $3$, $h=\frac{2}{N}$ and
$
\gamma_i=-1+\frac{2i}{N},\, i=0,1,\cdots, N.
$
Then
$
\mathcal X(\gamma_0, t)=\mathcal X(\gamma_N,t)=0,
$
and for $i=1,\cdots, N-1$,
$
\mathcal X_{\gamma\gamma}(\gamma_i)\approx\frac{1}{h^2}(\mathcal X(\gamma_{i+1})-2\mathcal X(\gamma_i)+\mathcal X(\gamma_{i-1})).
$
Let
$X(t)=(X_1(t),\cdots, X_{N-1}(t))=(\mathcal X(\gamma_1,t),\cdots, \mathcal X(\gamma_{N-1},t)).$
Define
\begin{eqnarray}
A=\frac{1}{h^2}\left(
     \begin{array}{ccccccc}
       -2 & 1 & 0 & 0 &\cdots & 0 & 0 \\
       1 & -2 & 1 & 0 &\cdots & 0 & 0 \\
       \vdots & \vdots & \vdots & \vdots &\vdots&\vdots&\vdots \\
       0 & 0 & 0 & 0 &\cdots & 1 & -2 \\
     \end{array}
   \right)\notag
\end{eqnarray}
Hence $\mathcal X_{\gamma\gamma}\approx AX$. Set $u=(u_1,u_2,\cdots,u_{N-1})^T$.
Then \eqref{e:parabolic1}-\eqref{e:parabolic3} becomes a discrete control system
$
\frac{d}{dt}X=f(X)+u,$ where $X(0)=X_0=(\mathcal X_0(\gamma_1),\, \cdots, \,\mathcal X_0(\gamma_{N-1})),
$
$
f(X)=(\sigma A+I_{N-1})X-X^3
$, $X^3=(X_1^3,\cdots, X_{N-1}^3)$.
The cost function becomes
$\int_0^{\infty}\left(\frac{1}{N}\|X(t)\|^2+\frac{1}{N}\|u(t)\|^2\right)dt,$
where $\|\cdot\|$ denotes the standard Euclid norm in $\mathbb R^{N-1}$.
The HJB equation is
$$
\nabla V^T(X)f(X)-\frac{1}{2}\nabla V^T(X)R\nabla V(X)+\frac{1}{N}X^TX=0.
$$
Here
$
R:=\frac{N}{2}I_{N-1},
$
where $I_{N-1}$ is identity matrix of order $N-1$.
The feedback control
$u(X)=-\frac{N}{2}\nabla V(X).$
The Hamiltonian matrix at $0$ is hyperbolic for $N>3$.

In the numerical experiment, we choose $N=31$. We utilize an LSTM type NN via PyTorch with $l=3$ and $m=60$.
Let $\nu=1$, $\sigma_1=1$ and $\sigma_2=\sigma_3=0.1$ in the loss function \eqref{e:loss}.
The error tolerance of scipy.integrate.solve\_bvp is set to be $10^{-7}$, $[0,+\infty)$ is replaced by $[0,30]$ to ensure numerical accuracy since $\beta_{\min}=1.021$. The initial mesh of $t$ is
$
0, h, 2h, \cdots, 100 h,
$
with $h=0.3$.
We choose $B_{0.8}(0)\subset \mathbb R^{30}$ by Monte Carlo test, and select $1500$ points $x_i$ on the sphere $\partial B_{0.8}$ according to uniform distribution. Here the norm in $\mathbb R^{30}$ is
$
|X|=\sqrt{\frac{1}{30}\sum_{i=1}^{30}X_i^2}.
$
The BVP solver with these settings successfully solves \eqref{e:Hamiltonian-flow-2}-\eqref{e:bvp-n} for all $1500$ boundary conditions.
Next, we solve IVP \eqref{e:Hamiltonian-flow-ivp} using the `scipy.integrate.solve\_ivp' with `method=Radau, rtol=$10^{-5}$ (relative tolerance), atol=$10^{-7}$ (absolute tolerance)'. A Monte Carlo test shows $T_-=-0.03$.
Finally, we choose $23$ points in $[0,30]$
according the exponential distribution with $\lambda=1$ and select $3$ points according uniform distribution in $[-0.03,0]$. This yields a total of $39000$ samples, denoted by $\mathcal D_1^{\rm train}$.
Similarly, we generate a validation set $\mathcal D^{\rm val}$ by solving $200$ trajectories on the stable manifold.

We use the internal optimizer Adam in PyTorch with a learning rate $lr=0.01\times 0.5^{[j/1500]}$, where $j$ is the iterative times, and train the NN for $6000$ epochs. After training, we achieve the train loss on $\mathcal D_1^{\rm train}$ being $1.3\times 10^{-3}$ and the test loss on $\mathcal D^{\rm val}$ being $1.6\times 10^{-3}$.
The training process takes approximately $5400$ seconds on a ThinkPad T480s laptop without using GPU. Monte Carlo test as in Subsection \ref{s:Monte-Carlo} indicates that the trained NN works well.
Figure \ref{f:closed-loop6} demonstrates the trajectories at two initial states.


\begin{figure}[htbp]
\vspace{-0.3cm}
\begin{center}
\subfigure[$\mathcal X_0=2(x^2-1)$]{
\includegraphics[width=0.4\textwidth]{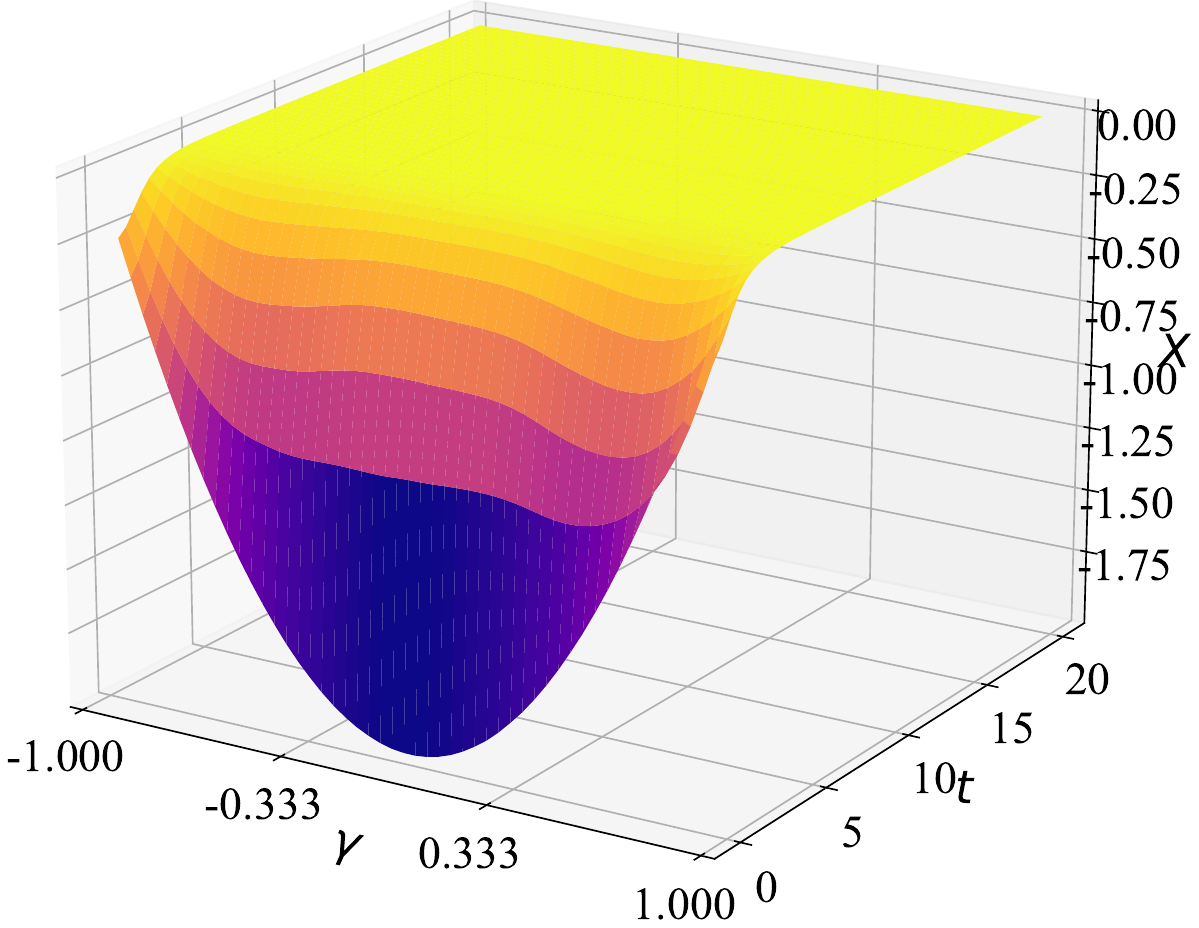}}
\subfigure[$\mathcal X_0=3.5\cos(1.5\pi x)$]{
\includegraphics[width=0.4\textwidth]{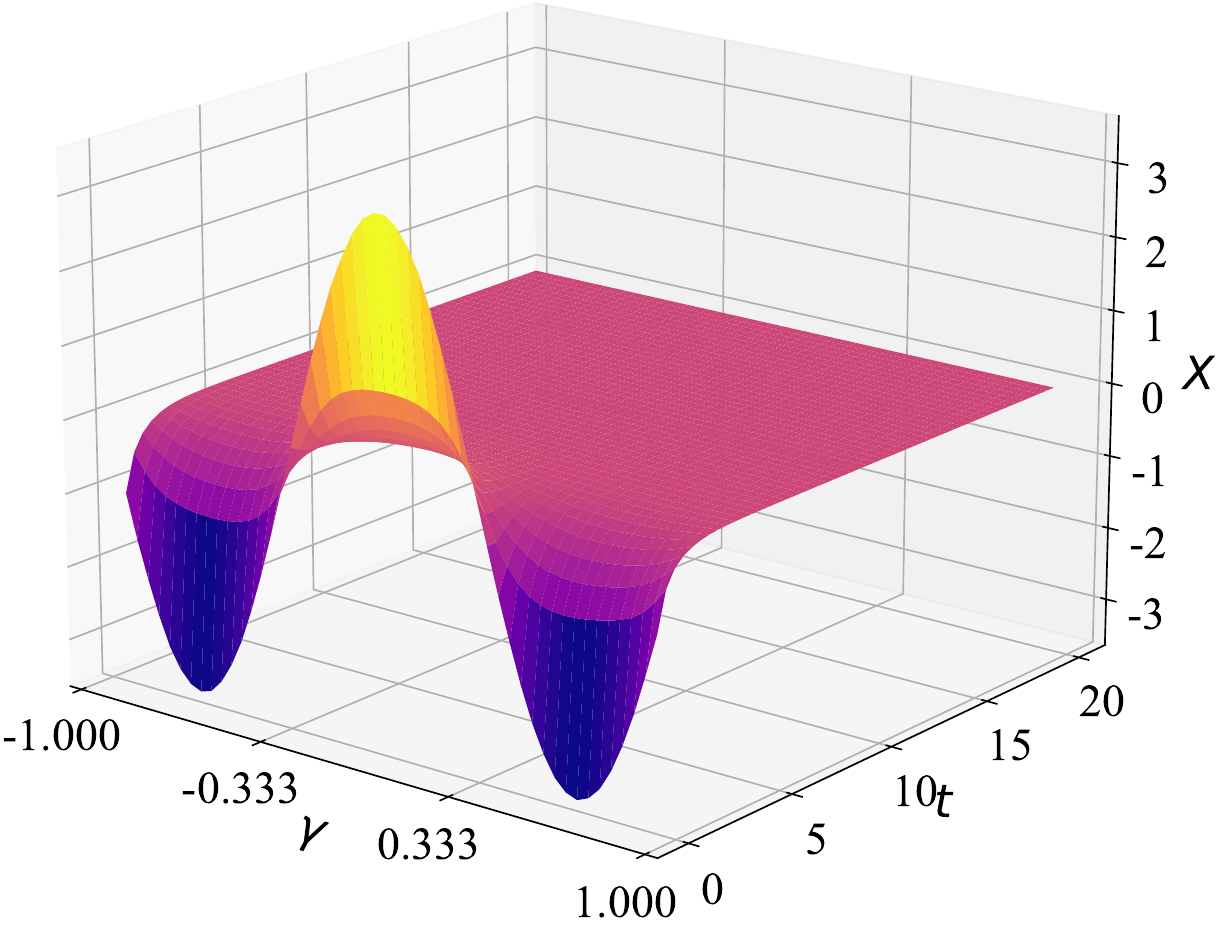}}
\end{center}
\hfill
\vspace{-0.5cm}
\caption{The dynamics of the NN controlled system at some initial states. }
 \label{f:closed-loop6}
 \vspace{-0.6cm}
\end{figure}


Finally, we compare the cost of the NN-generated controller, the LQR, and the standard BVP solver. When the initial function is large, the cost of the NN-generated feedback controller is much smaller than that of the LQR, and is very close to that of the standard BVP solver.
As in the previous example, we use $(x^{NN}(t),p^{NN}(t))$ generated by the trained NN as the initial guess to solve the BVP \eqref{e:Hamiltonian-flow-2}-\eqref{e:bvp-n}.
\begin{table}[htbp]
\vspace{-0.3cm}
\centering
\caption{Comparison of the costs for certain initial states}
\vspace{0.0cm}
\begin{tabular}{cccc}
\hline
Initial states & SM method & LQR & Optimal control\\
\hline
$3.5\cos(1.5\pi x)$ & 1.91 & 3.36 & 1.86\\
$2(x^2-1)$ & 2.04 & 2.92 & 1.96\\
$2.5(x-1)(x+1)^3$ & 2.15 & 4.39 & 2.10\\
\hline
$0.3\sin(\pi x)$ & 0.09673 & 0.09678 & 0.09667\\
$0.5(x-1)(x+1)^3$ & 0.0488 & 0.0513 & 0.0488\\
\hline
\end{tabular}\label{t:comparison2}
\vspace{-0.3cm}
\end{table}
\begin{remark}
Each evaluation of our NN-generated control signal takes on average less than one millisecond ($0.95\times 10^{-3}$ second) on a ThinkPad T480s laptop. This is faster than the controllers in \cite{Onken2022Neural} and \cite{nakamura2019adaptive}, whose time to generate control signals is at least several milliseconds. This fast control signal generation time is crucial for real-time applications.
\end{remark}

\section{Conclusion}

This paper proves that, under some natural conditions, NN approximations of the stable manifold of the HJB equation can generate nearly optimal controllers. Moreover, the approximate NN-controlled system is exponentially stable at the equilibrium as $t$ tends to $+\infty$.
Based on the theoretical conclusion, we propose an algorithm to construct a type of deep NN semiglobal approximations for the stable manifold. Our method relies on the geometric features of the HJB equations. The main advantage is that the derivatives of the value function of the optimal problem need not be calculated in the training procedure and computation of feedback control. The algorithm is based on adaptive data generation by finding trajectories on the stable manifold. Such kind of algorithm is grid-free, and is suitable for high-dimensional systems.
The effectiveness of our framework is illustrated by stabilizing the Reaction Wheel Pendulum and seeking optimal control of the parabolic Allen-Cahn equation.

\appendix

\section{Proof of Theorem \ref{t:decay-xnn}}\label{a:proof}
\begin{proof}[Proof of Theorem \ref{t:decay-xnn}]

Inspired by \cite[Proof of Theorem 2.77]{chicone2006ordinary}, we give a proof which focuses on the perturbation feature of the system.

Recalling that $f(x)=Ax+O(|x|^2)$, $R(x)=R(0)+O(|x|)$, $p(x)=Px+O(|x|^2)$ and $B=A-R(0)P$, we rewrite \eqref{e:x-closed} and \eqref{e:x-closed-NN} as
$$
\dot{x}=Bx+(f(x)-Ax)-(R(x)p(x)-R(0)Px)
:=Bx+n(x),\, x(0)=x_0,
$$
and
$$
\dot{x}=Bx+(f(x)-Ax)-(R(x)p^{NN}(\theta,x)-R(0)Px)
:=Bx+n^{NN}(x),\, x(0)=x_0.
$$

1. Note that
$$
x(t)=e^{Bt}x_0+\int_0^te^{B(t-s)}n(x(s))ds
$$
and
$$
x^{NN}(t)=e^{Bt}x_0+\int_0^te^{B(t-s)}n^{NN}(x^{NN}(s))ds.
$$
Then
$
x^{NN}(t)-x(t)
=\int_0^te^{B(t-s)}(n^{NN}(x^{NN}(s))-n(x(s)))ds.
$
We first assume $x^{NN}(t)\in \Omega$. Hence
$
|R(x^{NN}(t))|<C$, $|p(x^{NN}(t))|<C$,
$|R(x^{NN}(t))-R(x(t))|\le L|x^{NN}(t)-x(t)|.$
From Condition (a) in Theorem \ref{t:decay-xnn},
$
\left|n^{NN}(x^{NN}(s))-n(x(s))\right|
\le|f(x^{NN})-f(x)|+|R(x^{NN})|
|p^{NN}(\theta,x^{NN})-p(x^{NN})|
+|p(x^{NN})||R(x^{NN})-R(x)|
+|R(x)||p(x^{NN})-p(x)|
\le CL|x^{NN}-x|+C\delta.
$
Using Remark \ref{r:decay} and setting $y(t)=|x^{NN}(t)-x(t)|$, it holds that
\begin{eqnarray}\label{e:y}
y(t)
\le C\beta^{-1}\delta+\int_0^t  CLe^{-\beta(t-s)}y(s)ds.
\end{eqnarray}
Hence
$$
\frac{d}{dt}\log\left(C\beta^{-1}\delta+\int_0^t C Le^{-\beta(t-s)}y(s)ds\right)\le CL.\notag
$$
Therefore
$$\log\left(C\beta^{-1}\delta+\int_0^t CLe^{-\beta(t-s)}y(s)ds\right)
\le\log (C\beta^{-1}\delta)+CL t.$$
By \eqref{e:y},
$
y(t)\le C\beta^{-1}\delta e^{ CL t}.
$
Using Corollary \ref{e:x-decay-T}, it holds that for $$\delta<C^{-1}\beta e^{- CL T_{\varepsilon_1}}\varepsilon_1/2,$$
$
y(t)<\varepsilon_1/2,
$
$
|x^{NN}(T_{\varepsilon_1})|< \varepsilon_1$, and, $x^{NN}(t)\in \Omega$  for all $t\in [0,T_{\varepsilon_1}],
$
provided some $0<\varepsilon_1<\gamma_0$. Hence the assumption at the beginning of the proof is satisfied if $x(0)=x_0\in \Psi$ and ${\rm dist}(\partial\Psi,\partial \Omega)<\varepsilon_1$. Here ${\rm dist}(\partial \Psi,\partial \Omega)$ denotes the distance between the boundaries of $\Psi$ and $\Omega$.

2. Let $\varepsilon\in(0,\gamma_0)$ be a constant sufficiently small, and let $\varepsilon_1<\varepsilon$.
Assume $I=\{t\ge T_{\varepsilon_1}\,|\,|x^{NN}(t)|<\varepsilon\}$. Moreover, we have
$$
x^{NN}(t)=e^{Bt}x^{NN}(T_{\varepsilon_1})+\int_{T_{\varepsilon_1}}^te^{B(t-s)}n^{NN}(x^{NN}(s))ds.
$$
For $t\in I$, since $|p^{NN}(\theta,x^{NN}(t))-Px^{NN}(t)|\le \eta|x^{NN}(t)|$ (Condition (b) in Theorem \ref{t:decay-xnn}), it holds that
\begin{eqnarray}
&&|n^{NN}(x^{NN}(t))|\\
&\le&|f(x^{NN}(t))-Ax^{NN}(t)|
+|(R(x^{NN}(t))p^{NN}(\theta,x^{NN}(t))-R(0)Px^{NN}(t)|\notag\\
&\le& k|x^{NN}(t)|^2+|R(x^{NN}(t))-R(0)||Px^{NN}(t)|\notag\\
&&+|R(x^{NN}(t))||p^{NN}(\theta,x^{NN}(t))-Px^{NN}(t)|\notag\\
&\le& C_1((k+1)\varepsilon+\eta) |x^{NN}(t)|,\notag
\end{eqnarray}
where $C_1$ is a constant depending only on $f,R,p$.
It follows that
\begin{eqnarray}
|x^{NN}(t)|
&\le& C|x^{NN}(T_{\varepsilon_1})| e^{-\beta (t-T_{\varepsilon_1})}\label{e:xNN-g}\\
&+&CC_1((k+1)\varepsilon+\eta) \int_{T_{\varepsilon_1}}^te^{-\beta(t-s)} |x^{NN}(s)|ds.\notag
\end{eqnarray}
Rewrite \eqref{e:xNN-g} as
$$
|x^{NN}(t)|e^{\beta (t-T_{\varepsilon_1})}\le C|x^{NN}(T_{\varepsilon_1})|
+CC_1((k+1)\varepsilon+\eta) \int_{T_{\varepsilon_1}}^te^{\beta(s-T_{\varepsilon_1})} |x^{NN}(s)|ds.
$$
Using the Gronwall inequality (see e.g. \cite[Theorem 2.1]{chicone2006ordinary}), we find that
\begin{eqnarray}\label{e:xnn}
|x^{NN}(t)|
&\le &C|x^{NN}(T_{\varepsilon_1})|e^{-\alpha(t-T_{\varepsilon_1})},
\end{eqnarray}
where $\alpha=\beta-CC_1((k+1)\varepsilon+\eta)$. If the two positive constants $\varepsilon, \eta$ sufficiently small, then $\alpha>0$. In the following, we assume that the constant $C$ in \eqref{e:xnn} is greater than 2.

3. We prove that for $\varepsilon_1<\varepsilon/C$, $I=[T_{\varepsilon_1},\infty)$. If not, then there exists a $\bar T<\infty$ such that $I=[T_{\varepsilon_1},\bar T)$. Since $|x^{NN}(T_{\varepsilon_1})|<\varepsilon/C$, from \eqref{e:xnn} we get that
$
|x^{NN}(t)|< \varepsilon e^{-\alpha(t-T_{\varepsilon_1})}<\varepsilon,\, \forall t\in [T_{\varepsilon_1},\bar T).\notag
$
Then by the extension theorem of ODE, there is some small $\tau>0$ such that the solution is defined in the $[T_{\varepsilon_1},\bar T+\tau)$ and
$
|x^{NN}(t)|\le \varepsilon e^{-\alpha(t-T_{\varepsilon_1})}<\varepsilon,\,\forall t\in [T_{\varepsilon_1},\bar T+\tau).\notag
$
That is a contradiction by the definition of $I$.

4. In summary, let $C$ be fixed constant in \eqref{e:xnn} larger than $2$, and let $\varepsilon$ be a constant in $(0,\gamma_0)$ sufficiently small. For $0<\varepsilon_1<\varepsilon/C<\gamma_0$ and $$\delta<C^{-1}\beta e^{- CL T_{\varepsilon_1}}\varepsilon_1/2,$$ we have that
$$
|x^{NN}(t)-x(t)|\le \varepsilon,\quad \forall t\in(0,\infty),\notag
$$
and
$$
|x^{NN}(t)|<\varepsilon e^{-\alpha (t-T_{\varepsilon_1})}, \, \forall t\in (T_{\varepsilon_1},\infty).\notag
$$
This yields the first conclusion.

5. Further, we have
$
|q(x^{NN})-q(x)| \le 2 |Q| |x| |x^{NN}-x|\le C|x^{NN}-x|,
$
and
\begin{eqnarray}
&&|u^{NN}(x^{NN})-u(x)|\\
&\le& |W^{-1}|[(|g(x^{NN})-g(x)||p^{NN}(x^{NN})|+\notag\\
&&|g(x)||p^{NN}(x^{NN})-p(x^{NN})|
+|g(x)||p(x^{NN})-p(x)|)]\notag\\
&\le& C (|x^{NN}-x|+\delta),\notag
\end{eqnarray}
where $C>0$ is a constant depending only on $g,p,W$.
Hence by Theorem \ref{t:decay-xnn}, for $\delta>0$ sufficiently small, it holds that for some $\varepsilon<\gamma_0$,
$$
|J(x^{NN},u^{NN})-J(x,u)|\le C_2\varepsilon,
$$
where $C_2$ is a constant depending only on $f,g,R,p, W$. This completes the proof.
\end{proof}

\section*{Acknowledgements}

The author is greatly indebted to Prof. Wei Kang for many helpful discussions and suggestions. The author would like to express his appreciation to Prof. Qi Gong and Dr. Tenavi Nakamura-Zimmerer for the useful suggestions and comments on the paper. Moreover, the author deeply thanks the anonymous referees for their insightful comments and suggestions which essentially improve the quality of this paper.


\begin{thebibliography}{}

\end{thebibliography}


\begin{thebibliography}{1}

\bibitem{beard1997galerkin}
R.~W. Beard, G.~N. Saridis, and J.~T. Wen.
\newblock {G}alerkin approximations of the generalized
  {H}amilton-{J}acobi-{B}ellman equation.
\newblock {\em Automatica}, 33(12):2159--2177, 1997.

\bibitem{block2007reaction}
D.~J Block, K.~J {\AA}str{\"o}m, and M.~W Spong.
\newblock The reaction wheel pendulum.
\newblock {\em Synthesis Lectures on Control and mechatronics}, 1(1):1--105,
  2007.




\bibitem{cacace2012patchy}
S. Cacace, E. Cristiani, M. Falcone, and A. Picarelli.
\newblock A patchy dynamic programming scheme for a class of
  {H}amilton--{J}acobi--{B}ellman equations.
\newblock {\em SIAM Journal on Scientific Computing}, 34(5):A2625--A2649, 2012.

\bibitem{cannarsa2021singularities}
P. Cannarsa and W. Cheng.
\newblock Singularities of Solutions of Hamilton--Jacobi Equations.
\newblock {\em Milan Journal of Mathematics}, 89(1):187--215, 2021.

%

\bibitem{chen2021symplectic}
G. Chen and G. Zhu.
\newblock Symplectic algorithms for stable manifolds in control theory.
\newblock {\em IEEE Transactions on Automatic Control}, 67(6), 3105--3111, 2022.


\bibitem{chicone2006ordinary}
C. Chicone.
\newblock {\em Ordinary differential equations with applications}, volume~34.
\newblock Springer Science \& Business Media, 2006.

\bibitem{chow2019algorithm}
Y.~T. Chow, J. Darbon, S. Osher, and W. Yin.
\newblock Algorithm for overcoming the curse of dimensionality for
  state-dependent {H}amilton-{J}acobi equations.
\newblock {\em Journal of Computational Physics}, 387:376--409, 2019.

\bibitem{chrysafinos2022analysis}
K. Chrysafinos and D. Plaka.
\newblock Analysis and approximations of an optimal control problem for the Allen-Cahn equation.
\newblock {\em Numer. Math.}, 155: 35--82, 2023.


\bibitem{colli2015optimal}
P. Colli and J. Sprekels.
\newblock Optimal control of an Allen--Cahn equation with singular potentials and dynamic boundary condition.
\newblock {\em SIAM Journal on Control and Optimization}, 53(1):213--234, 2015.

\bibitem{darbon2016algorithms}
J. Darbon and S. Osher.
\newblock Algorithms for overcoming the curse of dimensionality for certain
  {H}amilton--{J}acobi equations arising in control theory and elsewhere.
\newblock {\em Research in the Mathematical Sciences}, 3(1):19, 2016.


\bibitem{du2020phase}
Q. Du and X. Feng.
\newblock The phase field method for geometric moving interfaces and their numerical approximations.
\newblock {\em Handbook of numerical analysis}, 21: 425--508, 2020.


\bibitem{weinan2021algorithms}
W. E, J. Han, A. Jentzen.
\newblock Algorithms for solving high dimensional PDEs: from nonlinear Monte Carlo to machine learning.
\newblock {\em Nonlinearity},
  35(1):278, 2021.


\bibitem{han2018solving}
J. Han, A. Jentzen, and W. E.
\newblock Solving high-dimensional partial differential equations using deep
  learning.
\newblock {\em Proceedings of the National Academy of Sciences},
  115(34):8505--8510, 2018.

\bibitem{hochreiter1997long}
S. Hochreiter and J. Schmidhuber.
\newblock Long short-term memory.
\newblock {\em Neural computation}, 9(8):1735--1780, 1997.

\bibitem{horibe2018nonlinear}
T.~{Horibe} and N.~{Sakamoto}.
\newblock Nonlinear optimal control for swing up and stabilization of the
  acrobot via stable manifold approach: Theory and experiment.
\newblock {\em IEEE Transactions on Control Systems Technology},
  27(6):2374--2387, 2019.

\bibitem{horibe2017optimal}
T. Horibe and N. Sakamoto.
\newblock Optimal swing up and stabilization control for inverted pendulum via
  stable manifold method.
\newblock {\em IEEE Transactions on Control Systems Technology},
  26(2):708--715, 2017.


\bibitem{hornik1991approximation}
K. Hornik.
\newblock Approximation capabilities of multilayer feedforward networks
\newblock {\em Neural networks},
  4(2):251--257, 1991.


\bibitem{jiang2016using}
F. Jiang, G. Chou, M.~Chen, and C.~J Tomlin.
\newblock Using neural networks to compute approximate and guaranteed feasible
  {H}amilton-{J}acobi-{B}ellman {PDE} solutions.
\newblock {\em arXiv preprint arXiv:1611.03158}, 2016.

\bibitem{jones2001scipy}
E. Jones, T. Oliphant, P. Peterson, et~al.
\newblock Scipy: Open source scientific tools for python.
\newblock 2001.

\bibitem{kalise2018polynomial}
D. Kalise and K. Kunisch.
\newblock Polynomial approximation of high-dimensional
  {H}amilton--{J}acobi--{B}ellman equations and applications to feedback
  control of semilinear parabolic pdes.
\newblock {\em SIAM Journal on Scientific Computing}, 40(2):A629--A652, 2018.

\bibitem{kang2019algorithms}
W. Kang, Q. Gong, T. Nakamura-Zimmerer, and F. Fahroo,
\newblock Algorithms of data development for deep learning and feedback design: A survey
\newblock {\em Physica D: Nonlinear Phenomena}, 425: 132955, 2021.

\bibitem{kang2015causality}
W. Kang and L. Wilcox.
\newblock A causality free computational method for {HJB} equations with
  application to rigid body satellites.
\newblock In {\em AIAA Guidance, Navigation, and Control Conference}, page
  2009, 2015.

\bibitem{kang2017mitigating}
W. Kang and L.~C Wilcox.
\newblock Mitigating the curse of dimensionality: sparse grid characteristics
  method for optimal feedback control and {HJB} equations.
\newblock {\em Computational Optimization and Applications}, 68(2):289--315,
  2017.

\bibitem{kang2017solving}
W. Kang and L.~C Wilcox.
\newblock Solving 1d conservation laws using {P}ontryagin {M}inimum
  {P}rinciple.
\newblock {\em Journal of Scientific Computing}, 71(1):144--165, 2017.

\bibitem{kunish2021semiglobal}
K. Kunisch and D. Walter.
\newblock Semiglobal optimal feedback stabilization of autonomous systems via deep neural network approximation.
\newblock {\em ESAIM: Control, Optimisation and Calculus of Variations}, Vol. 27, artical number 16, 2021.

\bibitem{mceneaney2007curse}
W.~M. McEneaney.
\newblock A curse-of-dimensionality-free numerical method for solution of
  certain {HJB} {PDE}s.
\newblock {\em SIAM Journal on Control and Optimization}, 46(4):1239--1276,
  2007.

\bibitem{nakamura2019adaptive}
T. Nakamura-Zimmerer, Q.~Gong, and W. Kang.
\newblock Adaptive deep learning for high dimensional
  {H}amilton-{J}acobi-{B}ellman equations.
\newblock {\em SIAM Journal on Scientific Computing}, Vol. 43(2):A1221--A1247, 2021.

\bibitem{nakamura2022neural}
T. Nakamura-Zimmerer, Q.~Gong, and W. Kang.
\newblock Neural network optimal feedback control with guaranteed local stability.
\newblock {\em IEEE Open Journal of Control Systems}, Vol. 1: 210--222, 2022.


\bibitem{ohtsuka2010solutions}
T. Ohtsuka.
\newblock Solutions to the {H}amilton-{J}acobi equation with algebraic
  gradients.
\newblock {\em IEEE Transactions on Automatic Control}, 56(8):1874--1885, 2010.


\bibitem{Onken2022Neural}
D. Onken, L. Nurbekyan, X. Li, S. Fung, S. Osher, and
  L. Ruthotto.
\newblock A neural network approach for high-dimensional optimal control
  applied to multiagent path finding.
\newblock {\em IEEE Transactions on Control Systems Technology}, 31(1):1--17, 2022.


\bibitem{poggio2017}
T. Poggio, H. Mhaskar, L. Rosasco, B. Miranda and Q. Liao.
\newblock Why and when can deep-but not shallow-networks avoid the curse of dimensionality: a review.
\newblock {\em International Journal of Automation and Computing}, 14(5):503--519, 2017.


\bibitem{raissi2018forward}
M. Raissi.
\newblock Forward-backward stochastic neural networks: Deep learning of
  high-dimensional partial differential equations.
\newblock {\em arXiv preprint arXiv:1804.07010}, 2018.

\bibitem{ross2007low}
I.~M. Ross, Q.~Gong, and P. Sekhavat.
\newblock Low-thrust, high-accuracy trajectory optimization.
\newblock {\em Journal of Guidance, Control, and Dynamics}, 30(4):921--933,
  2007.

\bibitem{ross2018scaling}
I.M.~Ross, Q.~Gong, M.~Karpenko, and R.J.~Proulx.
\newblock Scaling and balancing for high-performance computation of optimal
  controls.
\newblock {\em Journal of Guidance, Control, and Dynamics}, 41(10):2086--2097,
  2018.

\bibitem{sakamoto2013case}
N. Sakamoto.
\newblock Case studies on the application of the stable manifold approach for
  nonlinear optimal control design.
\newblock {\em Automatica}, 49(2):568--576, 2013.

\bibitem{sakamoto2008}
N. Sakamoto and A.~J van~der Schaft.
\newblock Analytical approximation methods for the stabilizing solution of the
  {H}amilton--{J}acobi equation.
\newblock {\em IEEE Transactions on Automatic Control}, 53(10):2335--2350,
  2008.


\bibitem{sirignano2018dgm}
J. Sirignano and K. Spiliopoulos.
\newblock {DGM}: A deep learning algorithm for solving partial differential
  equations.
\newblock {\em Journal of Computational Physics}, 375:1339--1364, 2018.


\bibitem{van1991state}
A.~J. van~der Schaft.
\newblock On a state space approach to nonlinear ${H}_{\infty}$ control.
\newblock {\em Systems \& Control Letters}, 16(1):1--8, 1991.

\bibitem{yegorov2018perspectives}
I. Yegorov and P.~M. Dower.
\newblock Perspectives on characteristics based curse-of-dimensionality-free
  numerical approaches for solving {H}amilton--{J}acobi equations.
\newblock {\em Applied Mathematics \& Optimization}, pages 1--49, 2018.


\end{thebibliography}
\bibliographystyle{plain}

\end{document}